\theoremstyle{plain}
\newtheorem{theorem}{Theorem}[section]
\newtheorem{proposition}[theorem]{Proposition}
\theoremstyle{definition}
\newtheorem{definition}[theorem]{Definition}
\newtheorem{example}[theorem]{Example}
\newtheorem{remark}{Remark}[section]
\newcommand{\bigslant}[2]{{\raisebox{.2em}{$#1$}\left/\raisebox{-.2em}{$#2$}\right.}}
\newcommand{\CP}{\mathbb{C}\mathbb{P}} 
\newcommand{\torus}{\mathbb{T}} 
\newcommand{\Sph}{\mathbb{S}} 
\newcommand{\Z}{\mathbb{Z}} 
\newcommand{\R}{\mathbb{R}} 
\newcommand{\C}{\mathbb{C}} 
\newcommand{\quat}{\mathbb{H}}
\newcommand{\g}{\mathfrak{g}}
\begin{document}

\title[A mapping tori construction ]{A mapping tori construction of strong HKT and generalized hyperk\"ahler manifolds} 

\author{Beatrice Brienza}
\address[Beatrice Brienza]{Dipartimento di Matematica ``G. Peano'', Universit\`{a} degli studi di Torino \\
Via Carlo Alberto 10\\
10123 Torino, Italy}
\email{beatrice.brienza@unito.it}

\author{Anna Fino}
\address[Anna Fino]{Dipartimento di Matematica ``G. Peano'', Universit\`{a} degli studi di Torino \\
Via Carlo Alberto 10\\
10123 Torino, Italy\\
\& Department of Mathematics and Statistics, Florida International University\\
Miami, FL 33199, United States}
\email{annamaria.fino@unito.it, afino@fiu.edu}

\author{Gueo Grantcharov}
\address[Gueo Grantcharov]{Department of Mathematics and Statistics \\
Florida International University\\
Miami, FL 33199, United States}
\email{grantchg@fiu.edu}

\keywords{Mapping tori, HKT structure, generalized hyperk\"ahler structure}

\subjclass[2024]{53C26}

\maketitle

\begin{abstract}  In the present paper we provide a construction via  mapping tori  of  (non Bismut flat) strong HKT and generalized hyperk\"ahler structures on compact manifolds. The skew-symmetric torsion is parallel, but the manifolds are  not  a product of a  hyperk\"ahler manifold and a compact Lie group.

\end{abstract}

\vskip.6cm

\centerline {\it  Dedicated to  Paul Gauduchon  for his 80th birthday}

\vskip.6cm

\section{Introduction}

A HKT structure on a manifold is a quadruple of three anti-commuting complex structures and a compatible metric, with all structures parallel with respect to a (necessarily unique)  connection with skew-symmetric torsion. When the torsion 3-form is closed the structure is called strong HKT. The HKT and strong HKT structures appeared first in String theory as the structures on the target space of a (0,4)-supersymmetric sigma model with Wess-Zumino term (see \cite{HP, GPS}). They also appear in superconformal quantum mechanics (\cite{MiS}) and the geometry of black hole moduli spaces \cite{GuP}.

 In mathematics, the HKT structures were considered first in \cite{GP} and are regarded as  the closest quaternionic analogs of K\"ahler structures. They admit a local potentials \cite{BS}. When the canonical bundle of one of the complex structures is trivial, HKT structures lead to a form of Hodge Theory \cite{Ve} and a characterization in real 8-dimensional case \cite{GLV} similar to the characterization of compact complex surfaces admitting  a K\"ahler metric. They provide a natural venue for a quaternionic Monge-Ampere equation \cite{AV,Alekser, BGV}. The torsion of the HKT connection coincides of the torsion of a connection considered by Bismut  \cite{Bi}  and Strominger  \cite{Strominger} is often called Bismut or Bismut-Strominger connection. In particular the HKT structures on $4n$-manifolds  are characterized as connection  with skew-symmetric torsion and local holonomy in $Sp(n)$.
 
 The strong HKT structures also appear in physics as part of the structure on the target space of (4,4) SUSY sigma model with B-field (or Wess-Zumino term). Such symmetry leads to a pair of strong HKT structures with an opposite torsion, which in mathematics are known as generalized hyperk\"ahler structures  \cite{Bredthauer}. In a recent preprint \cite{W} the strong HKT and generalized hyperkahler structures are studied in relation to AdS/CFT duality.
 
 The strong HKT structures are also pluriclosed and Bismut Hermitian-Einstein (or BHE), and an open question is whether there exist such a structure on a compact manifold which is not a local product of  a K\"ahler space and a Bismut flat space (\cite{GJS, BFG}).  Note that a  remarkable result of Gauduchon and Ivanov   \cite{GI}  states that  in complex dimension 2  the only BHE manifolds are Bismut flat, and in fact quotients of
the standard Bismut flat Hopf surface and recently a classification of compact   BHE manifolds in complex dimension 3 is given in   \cite{ALS}.  The  similar question for strong HKT structures  is considered in \cite{W}.

In this note we provide,  using the previous results in \cite{BFG},   a construction of strong HKT and generalized K\"ahler structures on compact  manifolds which are not a  product  of hyperk\"ahler manifold and a compact Lie group. According to our knowledge these are the first  examples of non Bismut flat  compact HKT  and generalized K\"ahler  manifolds, but the Bismut torsion is still parallel, so they do not provide an answer to the questions above.

The paper is organized as follows.  In Section 2 we briefly review some relevant facts about the  HKT manifolds. In
Section 3 we note that strong HKT manifolds which are not hyperk\"ahler do not admit a balanced hyperhermitian metric and consider the known constructions of strong HKT structures. In Section 4 we present the construction of  strong HKT and 
generalized hyperk\"ahler compact manifolds  via mapping tori.
\\

\textbf{Acknowledgements:} The authors would like to thank the anonymous referee for their suggestions and valuable comments which helped to improve this paper.

\section{HKT structures}

We start recalling the following

\begin{definition}
A {\em hypercomplex structure} on a $4n$-dimensional manifold $M$ is a triple $(I,J,K)$ of complex structures on $M$ satisfying the imaginary quaternionic relations.\\
Furthermore, if there exists a Riemannian metric $g$ with respect to which $I,J,K$ are skew, the structure $(I,J,K,g)$ is said hyperhermitian. 
\end{definition}
\begin{remark}
For quaternionic dimension $n=1$, compact manifolds admitting a hyperhermitian structure have been classified in \cite{Bo}. Any compact $4$-dimensional hyperhermitian manifold is either a torus with its flat metric, a K3 surface with its hyperk\"ahler metric, or the Hopf Surface with its standard locally conformally flat metric.
\end{remark}
Let us consider an Hermitian manifold $(M,I,g)$. The Levi-Civita connection is Hermitian, i.e., $\nabla^{LC} g=0, \ \nabla^{LC} J=0$, if and only if $d\omega=0$, where $\omega=gI$ is the fundamental form of $(I,g)$. Therefore, when $d\omega \neq 0$, one needs to modify the Levi-Civita connection to obtain a connection which is compatible with both $g$ and $I$. \\
In \cite{PG}, the affine line of Hermitian connections on the tangent bundle  $\operatorname{TM}$, known as Gauduchon or canonical connections, with expression
\begin{equation} \label{eqn:canonical}
g(\nabla^t_XY,Z)=g(\nabla^{LC}_XY,Z)+\frac{t-1}{4}(d^c\omega)(X,Y,Z)+\frac{t+1}{4}(d^c\omega)(X,IY,IZ),
\end{equation}
is introduced, where $d^c \omega=-Id\omega$. From now on, we will adopt the convention $Id\omega(X,Y,Z):=d\omega(IX,IY,IZ)$.
When $(M,I,g)$ is K\"ahler, $d^c\omega$ vanishes, so that the line collapses to a single point, namely, the Levi-Civita connection. Nonetheless, when  $(M,I,g)$ is not K\"ahler, the line is not trivial and the connections $\nabla^t$ have non vanishing torsion. 
For particular values of $t \in \R$, the Chern and the Bismut connections are recovered, i.e., $\nabla^{1}=\nabla_I^{Ch}$ and $\nabla^{-1}=\nabla_I^{B}$ (\cite{Bi, CH}). Despite $\nabla^{LC}, \nabla_I^{B}$, and $\nabla_I^{Ch}$ being mutually different connections, any one of them completely determines the other two. \\
In this work, we will focus mainly on the Bismut connection $\nabla^B$, also known as the Strominger connection \cite{Bi, Strominger}.  The Bismut connection can be characterized as the only Hermitian connection with totally skew-symmetric torsion and it follows from \eqref{eqn:canonical} that its expression is given by
\begin{equation*} 
g(\nabla^B_XY,Z)=g(\nabla^{LC}_XY,Z)-\frac{1}{2}d^c\omega(X,Y,Z),
\end{equation*}
with its torsion $3$-form $H$ given by
$$
H(X, Y,  Z) = g(T^B (X, Y), Z) = d \omega (IX, IY, IZ) = - d^c \omega (X, Y, Z), \quad  X, Y, Z \in \Gamma(TM).
$$
\begin{definition}
A $4n$-dimensional hyperhermitian manifold $(M,I,J,K,g)$ is said {\em hypek\"ahler with Torsion} (HKT, in short) if $\nabla^B_I=\nabla^B_J=\nabla^B_K=:\nabla^B$.
\end{definition}
The study of HKT structures is motivated by the fact that these structures pop up in string theory, in the context of certain supersymmetric sigma models. In fact, HKT manifolds first appeared in \cite{HP} as target spaces of $(4,0)$-SUSY sigma model with Wess-Zumino term. HKT structures also appear in supergravity. For instance, it has been shown that the geometry of the moduli space of a class of black holes in five dimensions is HKT  (\cite{pap}).
\begin{remark}
Recall that an Hermitian manifold $(M,J,g)$ is called Calabi-Yau with torsion (CYT in short) if the restricted holonomy of the Bismut connection is contained in $SU(n)$. This is equivalent to ask that the (first) Bismut Ricci curvature vanishes. \\
For an HKT manifold, $\operatorname{Hol}(\nabla^B) \subseteq Sp(n)$, since $\nabla^B g=0$ and $\nabla^B_I I=\nabla^B_J J=\nabla^B_K K=0$ (\cite{HP}). In particular, any of the three Hermitian structures is CYT.
\end{remark}
\begin{proposition} [\cite{GP}]\label{prop:HKT}
Given a hyperhermitian manifold $(M,I,J,K,g)$ the following  conditions are equivalent
\begin{enumerate}
\item $\nabla^B_I=\nabla^B_J=\nabla^B_K$,
\item $Id\omega_I=Jd\omega_J=Kd\omega_K$ \label{item:torsion},
\item $\partial_I(\omega_J+i\omega_K)=0$ \label{item:del},
\item $\overline{\partial}_I(\omega_J-i\omega_K)=0$.
\end{enumerate}
where ${\partial}_I$ and $\overline{{\partial}}_I$ denote the $\partial$ and $\overline{\partial}$ operators induced by the complex structure $I$.
\end{proposition}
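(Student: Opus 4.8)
The plan is to prove the equivalences by combining the uniqueness of metric connections with prescribed skew torsion with a bidegree computation relative to $I$. The equivalence $(1)\Leftrightarrow(2)$ is the most direct: for each $L\in\{I,J,K\}$ the Bismut connection $\nabla^B_L$ of $(M,L,g)$ is, as recalled above, the unique $g$-metric connection with totally skew-symmetric torsion that preserves $L$, and its torsion $3$-form is $H_L = L\,d\omega_L$; since any $g$-metric connection $\nabla$ with totally skew torsion $T$ is recovered from $T$ by $g(\nabla_XY,Z)=g(\nabla^{LC}_XY,Z)+\tfrac12 T(X,Y,Z)$, two such connections coincide precisely when their torsion $3$-forms coincide, so $\nabla^B_I=\nabla^B_J=\nabla^B_K$ is equivalent to $H_I=H_J=H_K$. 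The equivalence $(3)\Leftrightarrow(4)$ is just complex conjugation: $\omega_J,\omega_K$ are real, so $\overline{\omega_J+i\omega_K}=\omega_J-i\omega_K$, and $\overline{\partial_I\alpha}=\overline{\partial}_I\overline\alpha$ for every form $\alpha$.

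For the remaining equivalence $(2)\Leftrightarrow(3)$ I would argue as follows. Using the quaternionic relations and the $g$-compatibility of $I,J,K$, one checks that $\Omega:=\omega_J+i\omega_K$ satisfies $\Omega(IX,Y)=i\,\Omega(X,Y)$, i.e.\ $\Omega$ is of type $(2,0)$ with respect to $I$. Since $I$ is integrable, $d\Omega$ has only $(3,0)$- and $(2,1)$-components, hence $\partial_I\Omega=(d\Omega)^{(3,0)}$; and since acting by $I$ in all arguments multiplies an $(r,s)$-form by $i^{\,r-s}$, the vanishing of $(d\Omega)^{(3,0)}$ is equivalent to $I\,d\Omega=i\,d\Omega$. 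Expanding $d\Omega=d\omega_J+i\,d\omega_K$ and separating into real and imaginary parts (the forms $d\omega_J,d\omega_K,I\,d\omega_J,I\,d\omega_K$ all being real) turns this into $I\,d\omega_J=-d\omega_K$, and a short manipulation with the quaternionic relations shows $I\,d\omega_J=-d\omega_K\iff J\,d\omega_J=K\,d\omega_K$, i.e.\ $H_J=H_K$. By the first paragraph this means $\nabla^B_J=\nabla^B_K$; but a $g$-metric connection with skew torsion that preserves $J$ and $K$ also preserves $I=JK$, and is then forced by uniqueness to coincide with $\nabla^B_I$, so in fact $\nabla^B_I=\nabla^B_J=\nabla^B_K$, which is $(1)$, hence $(2)$. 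Conversely $(2)$ contains $H_J=H_K$ directly, which by the same computation gives $(3)$.

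I expect the main obstacle to be the bidegree bookkeeping in the second paragraph — identifying the Hodge type of $\Omega$ with respect to $I$, tracking how $I$ acts on the pure-type components of $d\Omega$, and the quaternionic identity linking $I\,d\omega_J=-d\omega_K$ with $J\,d\omega_J=K\,d\omega_K$. Everything else reduces to the uniqueness of metric connections with a given totally skew torsion $3$-form together with the elementary fact that a connection preserving $J$ and $K$ preserves $I=JK$.
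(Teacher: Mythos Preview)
The paper does not actually prove this proposition; it is stated with a citation to \cite{GP} and no proof is given. Your argument is correct and is essentially the standard one: the equivalence $(1)\Leftrightarrow(2)$ via uniqueness of the metric connection with prescribed skew torsion, $(3)\Leftrightarrow(4)$ by conjugation, and $(2)\Leftrightarrow(3)$ by the bidegree computation for $\Omega=\omega_J+i\omega_K$ together with the observation that a skew-torsion metric connection preserving $J$ and $K$ must preserve $I=JK$ and hence coincide with $\nabla^B_I$.

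One small remark on the bookkeeping you flagged: with the paper's convention $I\alpha(X,Y,Z):=\alpha(IX,IY,IZ)$ one has $I\beta=i^{p-q}\beta$ on $(p,q)$-forms, so $I$ acts as $-i$ on the $(3,0)$-part and as $i$ on the $(2,1)$-part of $d\Omega$; this pins down the sign in ``$\partial_I\Omega=0\iff I\,d\Omega=i\,d\Omega$'' exactly as you wrote. The quaternionic identity $I\,d\omega_J=-d\omega_K\iff J\,d\omega_J=K\,d\omega_K$ follows from $K=IJ$ and $I^2=-\mathrm{Id}$ on $3$-forms, as you indicated. Nothing is missing.
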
 
\begin{remark}
It has been shown in \cite{CS} that an almost hyperhermitian structure satisfying the condition (\ref{item:torsion}) in the previous proposition is, in fact, hyperhermitian.
\end{remark}
Given an hyperhermitian structure $(I,J,K,g)$ with fundamental forms $$\omega_I:=gI,  \quad \omega_J:=gJ, \quad  \omega_K:=gK, $$the form $\Omega:=\omega_J+i\omega_K$ is a $(2,0)$-form with respect to $I$ and one can easily observe that $J\Omega=\overline{\Omega}$ and that $\Omega(X,JX)>0$, for every non-zero vector field $X$. Conversely, any $(2,0)$-form with respect to $I$ satisfying the above conditions induces an hyperhermitian metric defined by $g(X,Y):=\Omega(X,JY)$ (with a slight abuse of notation, we will sometimes refer to $\Omega$ as the hyperhermitian metric). \\
By Proposition \ref{prop:HKT}, it is immediate to observe that in quaternionic dimension $1$, any hyperhermitian structure is HKT, since $\partial_I\Omega=0$. In higher dimension, given a hypercomplex structure, in general it is not true that there exists a compatible HKT metric. Some counterexamples arise on nilmanifolds of quaternionic dimension $n \ge 2$ (\cite{FG}). Here we briefly recall their construction. \\
\begin{definition} Let $\g$ be a Lie algebra endowed with a hypercomplex structure $(I,J,K)$. The hypercomplex structure is said to be {\em abelian}  if 
\[
[IX,IY]=[X,Y], \ [JX,JY]=[X,Y], \ [KX,KY]=[X,Y],
\]
for every $ X,Y \in \frak g.$
\end{definition}
It turns out that  any inner product compatible with an abelian hypercomplex structure is HKT. It has been shown in \cite{DF} that if $G$ is a $2$-step nilpotent Lie group, the hypercomplex structure arising from an invariant HKT one is abelian. As a consequence of the symmetrization process, one has the following.
\begin{theorem}[\cite{FG}]
Any compact quotient $M=\Gamma \backslash G$ of a $2$-step nilpotent Lie group with a non-abelian left invariant hyper-complex structure $(I,J,K)$ admits no HKT metric compatible with such a hypercomplex structure. 
\end{theorem}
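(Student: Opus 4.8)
The plan is to use the symmetrization (averaging) argument, which converts a left-invariant structure on the Lie group $G$ into an invariant tensor on the compact quotient $M = \Gamma\backslash G$ and, more importantly, converts the \emph{existence} of a compatible HKT metric on $M$ into the existence of an \emph{invariant} HKT metric, which can then be analyzed purely at the Lie algebra level where the cited results of \cite{DF} apply. First I would fix a bi-invariant volume form on $G$ (available since $G$ is nilpotent, hence unimodular), which descends to $M$ because $\Gamma$ is a lattice, and normalize it to have total mass $1$. Suppose, for contradiction, that $M$ carries an HKT metric $g$ compatible with the induced hypercomplex structure $(I,J,K)$ on $M$ (note $(I,J,K)$ descends to $M$ since it is left-invariant). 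Averaging $g$ over $M$ with respect to this volume form — equivalently, pushing forward and integrating over the fibres of $G \to M$ using the group action — produces a left-invariant Riemannian metric $\tilde g$ on $G$. Because $I,J,K$ are themselves invariant, the pointwise conditions "$g$-compatible with $I,J,K$" are preserved under averaging, so $\tilde g$ is a left-invariant hyperhermitian metric.

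The next step is to check that the averaged metric $\tilde g$ is still HKT. Here I would use the characterization in Proposition~\ref{prop:HKT}, specifically condition \eqref{item:del}: $(g,I,J,K)$ is HKT iff $\partial_I(\omega_J + i\omega_K) = 0$, i.e.\ iff the $(2,0)$-form $\Omega = \omega_J + i\omega_K$ is $\partial_I$-closed. Since $I$ is invariant, $\partial_I$ commutes with the averaging operator on forms (averaging is a chain map for the de Rham differential, and it commutes with the algebraic projection onto $(p,q)$-components because those projections are built from the invariant $I$). Hence if $\partial_I\Omega = 0$ on $M$ then the averaged form $\tilde\Omega$ satisfies $\partial_I\tilde\Omega = 0$ as well; and one checks $\tilde\Omega = \tilde\omega_J + i\tilde\omega_K$ is exactly the fundamental $(2,0)$-form of the averaged metric $\tilde g$ (averaging is $\C$-linear and commutes with the invariant operators $J$, $K$). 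Therefore $\tilde g$ is a left-invariant HKT metric on $G$ compatible with $(I,J,K)$, which pulls back to a left-invariant HKT inner product on the Lie algebra $\g$ compatible with the left-invariant hypercomplex structure.

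Finally I invoke the result of \cite{DF}: on a $2$-step nilpotent Lie group, any invariant HKT structure forces the hypercomplex structure to be abelian. This contradicts the hypothesis that $(I,J,K)$ is non-abelian, completing the proof.

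I expect the main obstacle to be the second step — verifying rigorously that "HKT" is preserved under symmetrization. The subtlety is that HKT is not a pointwise algebraic condition on the metric alone but involves first derivatives (it is a closedness/integrability-type condition on $\Omega$), so one genuinely needs that the averaging operator commutes with $\partial_I$; this is where invariance of $I$ is essential, and it is the place where the argument would break if $I$ were merely an arbitrary complex structure rather than a left-invariant one. The remaining points (unimodularity giving an invariant volume, compatibility conditions being pointwise and hence preserved, $\tilde\Omega$ being the fundamental form of $\tilde g$) are routine once the framework is set up. One should also remark that this averaging technique is standard in the nilmanifold literature, so the exposition can be kept brief.
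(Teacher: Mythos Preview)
Your proposal is correct and follows exactly the strategy the paper sketches: symmetrize an arbitrary compatible HKT metric on $M$ to obtain a left-invariant HKT metric on $G$, then invoke the result of \cite{DF} that on a $2$-step nilpotent Lie group an invariant HKT structure forces the hypercomplex structure to be abelian, contradicting the hypothesis. The paper gives no further detail beyond the phrase ``as a consequence of the symmetrization process'' (deferring to \cite{FG}), so your elaboration of why averaging preserves the HKT condition via commutation of $\partial_I$ with the averaging operator is precisely the content that fills in the sketch.
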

\begin{example}
We exhibit an explicit counterexample. Let $G=\R \times H_7$ be the $2$-step nilpotent Lie group whose Lie algebra is defined by the structure equations
\[
[e_1,e_2]=[e_3,e_4]=-e_6, \ [e_1,e_3]=-[e_2,e_4]=-e_7, \ [e_1,e_4]=[e_2,e_3]=-e_8.
\]
It is straightforward to note that $G$ admits compact quotients by \cite{Ma}, since the structure equations of its Lie algebra are rational. The non-abelian hypercomplex structure is given by 
\[
\begin{split}
&I(e_1)=e_2, \ I(e_3)=e_4, \ I(e_5)=e_6, \ I(e_7)=e_8, \\
&J(e_1)=e_3, \ I(e_2)=-e_4, \ J(e_5)=e_7, \ J(e_6)=-e_8.\\
\end{split}
\]
Furthermore, it was also shown in \cite{FG} that $G$ admits a one parameter family of non-abelian invariant hypercomplex structures except for a unique value of the parameter, to which corresponds an abelian one.  Hence, the next Theorem is proved.
\end{example}
\begin{theorem} [\cite{FG}]
The HKT structure is not stable under deformations, i.e., there exists a small hypercomplex deformation of the HKT structure which is not HKT.
\end{theorem}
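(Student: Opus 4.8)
The plan is to let the example of $G=\R\times H_7$ do all the work. As noted above, this $2$-step nilpotent Lie group has rational structure constants, so by \cite{Ma} it admits a lattice $\Gamma$; fix the compact manifold $M=\Gamma\backslash G$. By \cite{FG}, $G$ carries a one-parameter family $\{(I_t,J_t,K_t)\}_{t}$ of left-invariant hypercomplex structures, every member of which is non-abelian except for a single value $t=t_0$, at which the structure is abelian. Pushing the family down to the quotient, we obtain on the \emph{fixed} manifold $M$ a smooth (in fact polynomial in $t$, hence real-analytic) curve of invariant hypercomplex structures passing through the abelian one at $t_0$. Since the members of the curve are honest hypercomplex structures and the curve is continuous in the $C^\infty$ topology, for $t$ near $t_0$ these are small hypercomplex deformations of the one at $t_0$.

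Next I would examine the two behaviours along the family. At $t=t_0$ the hypercomplex structure is abelian, so \emph{every} compatible inner product is HKT; choosing any left-invariant such metric $g$ yields an HKT structure $(I_{t_0},J_{t_0},K_{t_0},g)$ on $M$, which is the structure to be deformed. On the other hand, for $t\neq t_0$ the structure $(I_t,J_t,K_t)$ is non-abelian and left-invariant on the $2$-step nilpotent group $G$, so the previous Theorem of \cite{FG} applies verbatim: $M$ admits no HKT metric compatible with $(I_t,J_t,K_t)$ — not even a non-invariant one. Hence arbitrarily close to the HKT hypercomplex structure there sit hypercomplex structures that carry no compatible HKT metric at all, which is precisely the asserted instability. (One may also remark that the deformation is genuinely nontrivial: a diffeomorphism carrying $(I_{t_0},J_{t_0},K_{t_0})$ to some $(I_t,J_t,K_t)$ would push $g$ forward to an HKT metric for the latter, contradicting the above, so the deformed structures are not equivalent to the original.)

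The main obstacle, and the only part requiring real work, is the construction and verification of the family itself: one must exhibit explicit $t$-dependent endomorphisms $I_t,J_t,K_t$ of the $8$-dimensional Lie algebra of $G=\R\times H_7$, check that for all $t$ in a neighbourhood of $t_0$ they satisfy the quaternionic relations and have vanishing Nijenhuis tensors (a finite linear-algebra computation against the given structure equations), and confirm that the abelian identities $[\,\cdot\,,\,\cdot\,]=[I_t\,\cdot\,,I_t\,\cdot\,]=\dots$ hold exactly at $t=t_0$ and fail otherwise. One should additionally check that the curve is regular at $t_0$ (so that nearby parameters yield genuinely small deformations rather than a degenerate jump), which again reduces to inspecting the $t$-derivative of the family at $t_0$. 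All of this is routine once the family is written down; the content of the theorem is the existence of such a family, supplied by \cite{FG}, combined with the two structural facts recalled in the excerpt — that abelian hypercomplex structures are automatically HKT, and that non-abelian invariant ones on $2$-step nilmanifolds admit no compatible HKT metric.
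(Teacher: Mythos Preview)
Your proposal is correct and follows essentially the same approach as the paper: the paper's entire argument is the sentence ``Furthermore, it was also shown in \cite{FG} that $G$ admits a one parameter family of non-abelian invariant hypercomplex structures except for a unique value of the parameter, to which corresponds an abelian one. Hence, the next Theorem is proved'', combined with the preceding theorem on $2$-step nilmanifolds with non-abelian invariant hypercomplex structures. Your write-up simply unpacks this in more detail (adding the remarks on regularity and non-equivalence of the deformed structures), but the strategy is identical.
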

In a suitable sense, HKT metrics play in hypercomplex geometry the same role as K\"ahler metrics play in complex geometry. In fact, in analogy to the K\"ahler case, one has the existence of a local potential.
\begin{theorem}[\cite{BS}]  
Let  $(M,I,J,K)$ be a hypercomplex manifold. A form $\Omega \in \Omega^{2,0}_I(M)$ satisfying $J\Omega=\overline{\Omega}$ can be written locally as $\Omega=\partial_I \partial_J u$ for some smooth real-valued local function $u$, if and only if it is HKT, i.e., $\partial_I \Omega=0$. 
\end{theorem}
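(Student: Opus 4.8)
The plan is to prove the two implications separately: the direction ``$\Omega=\partial_I\partial_J u\Rightarrow\partial_I\Omega=0$'' is purely formal, while the converse rests on a local $\partial_I\partial_J$-Poincar\'e lemma. Recall the construction of $\partial_J$ on the complex $(\Lambda^{\bullet,0}_I(M),\partial_I)$: the natural action of $J$ on forms, composed with complex conjugation, gives an anti-linear involution $\mathcal{J}$ of $\Lambda^{\bullet,0}_I(M)$ (so that the condition $J\Omega=\overline{\Omega}$ reads $\mathcal{J}\Omega=\Omega$), and one sets $\partial_J:=\mathcal{J}\circ\partial_I\circ\mathcal{J}^{-1}$. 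Then $\partial_J$ is $\mathbb{C}$-linear, raises the holomorphic degree by one, and satisfies $\partial_J^{2}=0$ together with the anticommutation $\partial_I\partial_J+\partial_J\partial_I=0$, while $\mathcal{J}$ intertwines $\partial_I$ and $\partial_J$ up to sign. Granting these standard facts, the easy direction is immediate: if $\Omega=\partial_I\partial_J u$ with $u$ a real local function, then $\partial_J u\in\Lambda^{1,0}_I$ by construction, so $\Omega\in\Lambda^{2,0}_I$; the intertwining property and $\mathcal{J}u=u$ give $\mathcal{J}\Omega=\Omega$; and $\partial_I\Omega=\partial_I^{2}(\partial_J u)=0$.

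For the converse, suppose $\Omega\in\Lambda^{2,0}_I(M)$ satisfies $\mathcal{J}\Omega=\Omega$ and $\partial_I\Omega=0$. Applying $\mathcal{J}$ and using that it intertwines the two differentials, one first sees that $\partial_J\Omega=0$ as well, so $\Omega$ is closed for both $\partial_I$ and $\partial_J$. Next I would invoke the $\partial_I$-Poincar\'e lemma: the complex $(\Lambda^{\bullet,0}_I(M),\partial_I)$ is locally exact in positive degrees, because complex conjugation carries a $\partial_I$-closed $(p,0)_I$-form to a $\overline{\partial}_I$-closed $(0,p)_I$-form, to which the Dolbeault--Grothendieck lemma applies. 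Hence, on a neighbourhood of any point, $\Omega=\partial_I\alpha$ for some $\alpha\in\Lambda^{1,0}_I$. The task then reduces to correcting $\alpha$ inside its $\partial_I$-cohomology class to a form of the shape $\partial_J u$: if $\alpha-\partial_I f=\partial_J u$ for local functions $f,u$, then $\Omega=\partial_I\partial_J u$, and since $\partial_I$-closed $(1,0)_I$-forms are locally $\partial_I$-exact from functions, this is equivalent to solving $\partial_J\alpha=\partial_I\partial_J f$.

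The main obstacle is exactly this step, i.e.\ the local $\partial_I\partial_J$-lemma: a $(2,0)_I$-form that is simultaneously $\partial_I$- and $\partial_J$-closed should be, locally, $\partial_I\partial_J$ of a function. The subtlety is that the naive descent fails to terminate --- using $\partial_J\Omega=0$ one gets $\partial_I(\partial_J\alpha)=0$ and may again write $\partial_J\alpha=\partial_I\alpha_2$, but the holomorphic degree never drops, so a single extra application of Dolbeault does not suffice and one is led back to a statement of the same type. I would resolve this in one of two ways: (i) encode the compatibility conditions $\partial_I\Omega=\partial_J\Omega=0$ through the one-parameter family of nilpotent operators $D_\zeta:=\partial_I+\zeta\partial_J$ (indeed $D_\zeta^{2}=0$), assemble the successive $\partial_I$- and $\partial_J$-primitives of $\Omega$ into a single $D_\zeta$-primitive depending on $\zeta$, and run the $\partial\overline{\partial}$-lemma argument in this enriched setting; or (ii) pass to the hypercomplex twistor space $Z=M\times\CP^1$, package $\Omega$ and its primitives into a $\overline{\partial}_Z$-closed form on $Z$ (twisted along the $\CP^1$-factor), apply the ordinary $\overline{\partial}$-Poincar\'e lemma there, and read off a local potential on $M$ by restricting to the fibre over $I$. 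Either route yields a complex-valued local function $u$ with $\Omega=\partial_I\partial_J u$.

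It remains to make $u$ real. From $\mathcal{J}\Omega=\Omega$ and the intertwining identity, $\partial_I\partial_J(\overline{u})=\partial_I\partial_J u$ (the sign working out precisely as in the easy direction), so $u$ may be replaced by its real part $\tfrac{1}{2}(u+\overline{u})$ without changing $\Omega=\partial_I\partial_J u$. This completes the plan, the crux being the local $\partial_I\partial_J$-Poincar\'e lemma of the third paragraph.
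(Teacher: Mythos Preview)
The paper does not give its own proof of this theorem: it is quoted as a result of Banos--Swann \cite{BS} and no argument is supplied. There is therefore nothing in the paper to compare your proposal against.

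On the proposal itself: the easy direction is fine, and for the converse you correctly isolate the crux as a local $\partial_I\partial_J$-Poincar\'e lemma. However, you do not actually establish that lemma. Strategy (i) with the pencil $D_\zeta=\partial_I+\zeta\partial_J$ is only a heuristic as written --- you do not explain how to ``assemble the successive primitives'' nor why the resulting argument terminates where the naive one did not. Strategy (ii) via the twistor space $Z$ is indeed the route taken in \cite{BS}, but the substance lies exactly in the details you omit: how to lift $\Omega$ to a $\overline{\partial}_Z$-closed object with the correct twist along $\CP^1$, why the ordinary Dolbeault lemma on $Z$ applies in the right degree, and how the resulting potential restricts coherently to the fibres. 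As it stands, the proposal is a reasonable outline but has a genuine gap at the $\partial_I\partial_J$-lemma, which is precisely the nontrivial content of the theorem.
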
 
Twistor theory for hypercomplex manifolds have been investigated in literature (\cite{sal, PP}). It is not surprising that when a hypercomplex manifold has a HKT-structure, the geometry of the twistor space is much more rich. In order to enunciate the main result, we need to fix some preliminary notations. \\
Let $(M, I,J,K)$ be a $4n$-dimensional hypercomplex manifold. The smooth manifold $Z = M  \times S^2$ admits an integrable complex structure defined as follows. For a unit vector $v = (v_1, v_2, v_3) \in \R^3$ let $I_v$ be the complex structure $v_1I + v_2 J + v_3K$ and let $J_v$ be the complex structure on $S^2$ defined by the cross product in $\R^3: J_v w = v \cross w$. \\The complex structure on $Z = M \times S^2$ at the point $(p,v)$ is $\cal{J}_{(p,v)} = I_v \oplus J_v$. We shall have to consider another almost complex structure, $\cal{J}_{2_{(p,v)}} = I_v \oplus -J_v$, which always fails to be integrable.  In the next theorem, we assume $Z$ endowed with the integrable complex structure $\cal{J}$.
\begin{theorem} [\cite{HP,GP}]
Let $(M,I,J,K)$ be a $4n$-dimensional HKT manifold. Then the twistor space $Z$ is a complex manifold such that
\begin{enumerate}
\item  the fibers of the projection $\pi : Z \to M$ are rational curves with holomorphic normal bundle $\oplus^{2n}\cal{O}(1)$,
\item there is a holomorphic projection $p : Z \to \CP^1$ such that the fibers are the manifold $M$ equipped with complex structures of the hypercomplex structure generated by $\{I,J,K\}$,
\item denoted by $\cal{D}$  the sheaf of kernel of the differential $dp$,  we have the exact sequence \[
0 \to  \cal{D} \to  \Theta_Z \to  p^*\overline{\Theta}_{\CP^1}  \xrightarrow{\text{dp}} 0,\] and there is a $\cal{J}_2$-holomorphic section of $\bigwedge^{(0,2)}\cal{D} \otimes p^*\overline{\Theta}_{\CP^1}$ defining a positive definite $(0,2)$-form on each fiber,
\item there is an anti-holomorphic map $\tau$ compatible with (1), (2) and (3) and inducing the antipodal map on $\CP^1$.
\end{enumerate}
Conversely, if $Z$ is a complex manifold with a non-integrable almost complex structure $\cal{J}_2$ with the above four properties, then the parameter space of real sections of the projection $p$ is a $4n$-dimensional manifold $M$ with a natural HKT-structure for which $ Z$ is the twistor space.
\end{theorem}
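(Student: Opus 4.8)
The plan is to run the classical twistor argument, establishing the forward implication first and then the converse.

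\emph{Forward direction.} First I would recall that the integrable hypercomplex structure $(I,J,K)$ carries a unique torsion-free connection $\nabla$, the Obata connection, with $\nabla I=\nabla J=\nabla K=0$. Working in a $\nabla$-parallel frame along $M$ and in stereographic coordinates on $S^2$, a direct computation --- the hypercomplex analogue of the Atiyah--Hitchin--Singer argument --- shows that the Nijenhuis tensor of $\cal{J}$ on $Z=M\times S^2$ vanishes, so $Z$ is a complex manifold. Property (1) is then immediate: the fiber $\pi^{-1}(p)=\{p\}\times S^2$ carries $J_v$, hence is a rational curve, and its normal bundle is the real space $T_pM\cong\quat^n$ with the $S^2$-family of complex structures $\{I_v\}$, which one identifies with $\oplus^{2n}\cal{O}(1)$ via $\quat\otimes_\R\C\cong H^0(\CP^1,\cal{O}(1)^{\oplus 2})$. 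For (2), $p\colon Z\to\CP^1$ is the projection onto the $S^2=\CP^1$ factor; it is holomorphic because $\cal{J}_{(p,v)}=I_v\oplus J_v$ restricts to $J_v$ in the fiber direction of $p$, and $p^{-1}(v)=(M,I_v)$. For (4), I would take $\tau(p,v)=(p,-v)$: it is anti-holomorphic since $I_{-v}=-I_v$ and the antipodal map of $S^2$ reverses $J_v$, it covers the antipodal map on $\CP^1$, and compatibility with (1) and (2) is clear.

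\emph{Property (3), the HKT input.} The exact sequence is nothing but $dp$ applied to $\Theta_Z$, with $\cal{D}=\ker dp$ the distribution restricting to $(TM,I_v)$ over $p^{-1}(v)$. From the HKT metric I would build the $(2,0)$-form $\Omega=\omega_J+i\omega_K$, or rather the $S^2$-family $\{\Omega_v\}$ with $\Omega_v$ of type $(2,0)$ for $I_v$; since $\Omega_v$ depends quadratically on $v$, these glue into a section of $\bigwedge^{(0,2)}\cal{D}\otimes p^*\overline{\Theta}_{\CP^1}$, the twist $p^*\overline{\Theta}_{\CP^1}=p^*\overline{\cal{O}(2)}$ absorbing precisely that degree-$2$ dependence, and fiberwise positive definiteness is the condition $\Omega(X,JX)>0$. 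It then remains to show that this section is holomorphic for the non-integrable $\cal{J}_2=I_v\oplus(-J_v)$: computing $\overline{\partial}_{\cal{J}_2}$ of the section in an $I_v$-eigenframe, its vertical ($M$-direction) part vanishes precisely when $\partial_I\Omega=0$, while the mixed part vanishes identically by the quadratic dependence on $v$, so $\cal{J}_2$-holomorphicity is equivalent to the HKT condition of Proposition \ref{prop:HKT}.

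\emph{Converse direction.} Given $(Z,\cal{J},\cal{J}_2,p,\tau)$ satisfying (1)--(4), I would let $M$ be the parameter space of $\tau$-invariant ("real") holomorphic sections of $p$. By (1) such a section has normal bundle $\oplus^{2n}\cal{O}(1)$, which is unobstructed since $H^1(\CP^1,\cal{O}(1))=0$, so by Kodaira's theorem the space of all sections of $p$ is a complex manifold of complex dimension $2n\,h^0(\cal{O}(1))=4n$, and imposing $\tau$-reality cuts it to a real $4n$-dimensional manifold $M$. Evaluation at $v\in\CP^1$ maps $M$ into the complex manifold $p^{-1}(v)$ and hence equips $M$ with a complex structure $I_v$ for each $v$; identifying $T_sM\cong H^0(\CP^1,\cal{O}(1)^{\oplus 2n})$, on which the $J_v$'s act quaternionically, the $\CP^1$-family $\{I_v\}$ becomes a hypercomplex structure $(I,J,K)$ on $M$. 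Finally the $\cal{J}_2$-holomorphic section of (3) restricts over $(M,I)$ to a $(2,0)$-form $\Omega$ with $J\Omega=\overline{\Omega}$ and $\Omega(X,JX)>0$, so $g(X,Y):=\Omega(X,JY)$ is a hyperhermitian metric; holomorphicity of the section translates into $\partial_I\Omega=0$, whence $(M,I,J,K,g)$ is HKT by Proposition \ref{prop:HKT}, with $Z$ manifestly its twistor space.

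The step I expect to be the main obstacle is the precise equivalence inside Property (3). Since $\cal{J}_2$ is non-integrable, "$\cal{J}_2$-holomorphic section" must be read through the operator $\overline{\partial}_{\cal{J}_2}$ on $\bigwedge^{(0,2)}\cal{D}\otimes p^*\overline{\Theta}_{\CP^1}$, and matching this operator component by component with the three pieces of $\partial_I(\omega_J+i\omega_K)$ needs careful bookkeeping of how differentiation along $S^2$ generates the $p^*\overline{\cal{O}(2)}$ twist and how the vertical and mixed contributions recombine. Everything else is either the standard hypercomplex twistor machinery or a formal consequence of Kodaira deformation theory.
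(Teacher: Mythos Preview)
The paper does not prove this theorem; it is quoted from \cite{HP,GP} as background and no argument is given in the text. There is therefore nothing in the paper to compare your proposal against directly.

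That said, your outline is essentially the proof in the cited source \cite{GP}: integrability of $\cal{J}$ via the torsion-free hypercomplex (Obata) connection, the standard identification of the normal bundle of a twistor line with $\oplus^{2n}\cal{O}(1)$ through $\quat\otimes_\R\C$, the antipodal real structure, and Kodaira's completeness theorem for the converse. You have also correctly isolated the only place where the HKT hypothesis enters --- property (3) --- and the equivalence you state, that $\cal{J}_2$-holomorphicity of the twisted $(0,2)$-section is the same as $\partial_I(\omega_J+i\omega_K)=0$, is precisely the content of the relevant computation in \cite{GP}. Your caveat about bookkeeping the $\overline{\partial}_{\cal{J}_2}$ operator against the $p^*\overline{\cal O(2)}$ twist is the genuine technical point there, and it is handled in \cite{GP} by writing everything in a local frame adapted to the $S^2$-family and checking the vertical and mixed components separately, exactly as you anticipate.
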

Another construction of HKT manifold is given by HKT reductions. This construction, originally developed by Joyce for hypercomplex manifolds (\cite{DJ}), has been improved for HKT manifolds in \cite{GPP}. We recall it here.\\
 Let $(M,I,J,K,g)$ be an HKT manifold and let $G$ be a compact group of hypercomplex isometries of $M$. Denote the algebra of hyper-holomorphic vector fields by $\g$. Suppose that $\mu = (\mu_1, \mu_2, \mu_3) : M \to \R^3 \otimes \g$ is a $G$-equivariant map satisfying: the Cauchy-Riemann condition, i.e.,  $Id\mu_1 = Jd\mu_2 = Kd\mu_3$, and the transversality condition, i.e., $Id\mu_1 \neq 0, \ Jd\mu_2 \neq 0, Kd\mu_3 \neq 0 $ for any $X \in \g$. Any map satisfying these conditions is called a $G$-moment map. 
\begin{theorem} [\cite{DJ,GPP}]
Let $(M,I,J,K,g)$  be a HKT manifold. Suppose that $G$ is a compact group of hypercomplex isometries admitting a $G$-moment map $\mu$. Then hypercomplex reduced space $N = M//G$  inherits a HKT structure.
\end{theorem}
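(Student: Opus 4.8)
The plan is to exhibit $N$ as the quotient of the zero level set $P:=\mu^{-1}(0)$ by $G$, to transfer $(I,J,K,g)$ to $N$ along a canonical horizontal complement of the $G$-orbits, and then to deduce the HKT property of $N$ from that of $M$ via the characterization in Proposition~\ref{prop:HKT}.

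\emph{Step 1: the level set and the horizontal distribution.} Write $X^{\#}$ for the fundamental vector field of $X\in\g$ and $V_p:=\{X^{\#}_p:X\in\g\}=T_p(G\cdot p)$. I would first unwind the defining conditions on $\mu$: the Cauchy--Riemann condition is equivalent to $\iota_{X^{\#}}\omega_I=d\langle\mu_1,X\rangle$ together with the analogous identities relating $(\omega_J,\mu_2)$ and $(\omega_K,\mu_3)$, since applying $I$, $J$ and $K$ respectively turns all three right-hand sides into $\iota_{X^{\#}}g$. Hence $\ker d\mu_p=(IV_p+JV_p+KV_p)^{\perp}$, and the transversality hypothesis guarantees both that $0$ is a regular value and that $W_p:=V_p+IV_p+JV_p+KV_p$ is a direct sum of dimension $4\dim G$; thus $P$ is a smooth submanifold of codimension $3\dim G$ on which $G$ acts freely and properly, and $N:=P/G$ is a manifold of dimension $\dim M-4\dim G$. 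Set $\mathcal H_p:=W_p^{\perp}$. Since $W_p$ is $I$-, $J$- and $K$-invariant and $g$ is hyperhermitian, $\mathcal H_p$ is again $I$-, $J$- and $K$-invariant; furthermore $\mathcal H_p\subseteq(IV_p+JV_p+KV_p)^{\perp}=T_pP$ and $\mathcal H_p\cap V_p=0$, so a dimension count gives $T_pP=V_p\oplus\mathcal H_p$. Therefore the projection $\pi\colon P\to N$ induces an isomorphism $d\pi_p\colon\mathcal H_p\xrightarrow{\ \sim\ }T_{\pi(p)}N$, and I would use it to transport $(I,J,K,g)|_{\mathcal H_p}$ to a quadruple $(I^N,J^N,K^N,g^N)$ on $N$. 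Equivariance of $\mu$ and $G$-invariance of the structures on $M$ make this independent of the chosen $p$, so the quadruple is well defined, satisfies the quaternionic relations, and is metric-compatible; a priori it is an almost hyperhermitian structure.

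\emph{Step 2: descent of the fundamental forms and the torsion identity.} Let $\iota\colon P\hookrightarrow M$ be the inclusion. Because $X^{\#}$ is tangent to $P$ and $\mu$ vanishes on $P$, one computes $\iota_{X^{\#}}\iota^{*}\omega_I=\iota^{*}d\langle\mu_1,X\rangle=d\,\iota^{*}\langle\mu_1,X\rangle=0$ and $\mathcal L_{X^{\#}}\iota^{*}\omega_I=\iota^{*}\mathcal L_{X^{\#}}\omega_I=0$, so $\iota^{*}\omega_I$ is basic for the principal bundle $P\to N$ and descends to a $2$-form on $N$ which, evaluated on horizontal vectors, is precisely $\omega^{N}_{I^N}=g^NI^N$ by construction; the same holds for $\omega_J$ and $\omega_K$. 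Consequently $\pi^{*}\omega^{N}_{I^N}=\iota^{*}\omega_I$ on $\mathcal H$, hence $\pi^{*}d\omega^{N}_{I^N}=\iota^{*}d\omega_I$; evaluating on horizontal lifts and using that $I^N$ coincides with $I$ on the $I$-invariant space $\mathcal H$ one gets $(I^Nd\omega^{N}_{I^N})(d\pi X,d\pi Y,d\pi Z)=(Id\omega_I)(X,Y,Z)$ for horizontal $X,Y,Z$, and likewise for $J$ and $K$. Since $(M,I,J,K,g)$ is HKT, Proposition~\ref{prop:HKT} gives $Id\omega_I=Jd\omega_J=Kd\omega_K$ on $M$, and restricting to horizontal vectors yields $I^Nd\omega^{N}_{I^N}=J^Nd\omega^{N}_{J^N}=K^Nd\omega^{N}_{K^N}$ on $N$. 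By the result of \cite{CS} recalled after Proposition~\ref{prop:HKT}, an almost hyperhermitian structure satisfying this identity is hyperhermitian, so $I^N,J^N,K^N$ are integrable (this recovers Joyce's hypercomplex reduction \cite{DJ}); then Proposition~\ref{prop:HKT}, read in the converse direction, shows that $(N,I^N,J^N,K^N,g^N)$ is HKT.

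I expect the main obstacle to be the linear-algebra core of Step~1: deducing from the transversality hypothesis that $W_p=V_p+IV_p+JV_p+KV_p$ is a genuine direct sum of dimension $4\dim G$ and that $\mathcal H_p\subseteq\ker d\mu_p$, since it is exactly this that makes $d\pi_p|_{\mathcal H_p}$ an isomorphism and hence equips $N$ with structures of the correct rank — here the specific form of the moment map equations is essential. Once the splitting $T_pP=V_p\oplus\mathcal H_p$ is established, Step~2 is essentially formal, using only naturality of $d$ and $\iota^{*}$, Cartan's formula, and the vanishing of $\mu$ on $P$.
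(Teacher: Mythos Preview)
The paper does not give its own proof of this statement: the theorem is quoted, without argument, as a result of \cite{DJ,GPP}. There is therefore nothing in the paper to compare your proposal against.

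On its own merits, your sketch has a genuine gap at the very start of Step~1. You assert that the Cauchy--Riemann condition $Id\mu_1=Jd\mu_2=Kd\mu_3$ is \emph{equivalent} to the three symplectic moment-map equations $\iota_{X^{\#}}\omega_I=d\langle\mu_1,X\rangle$, $\iota_{X^{\#}}\omega_J=d\langle\mu_2,X\rangle$, $\iota_{X^{\#}}\omega_K=d\langle\mu_3,X\rangle$. Only one implication is valid: if the three moment-map equations hold, then applying $I,J,K$ indeed turns all three into $\iota_{X^{\#}}g$, so the CR identity follows. The converse fails in general, because the CR condition is a purely differential constraint on $\mu$ and carries no information linking $d\mu$ to the fundamental vector fields or to the metric. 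In Joyce's hypercomplex quotient \cite{DJ}, which is exactly the framework the paper invokes, no such equations are assumed and the induced hypercomplex structure on $N$ is \emph{not} obtained by metric-orthogonal horizontal projection.

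This matters because everything downstream rests on the moment-map equations you have not established: the identification $\ker d\mu_p=(IV_p+JV_p+KV_p)^{\perp}$, the inclusion $\mathcal H_p\subseteq T_pP$ that makes $d\pi|_{\mathcal H_p}$ an isomorphism, and the basicness computation $\iota_{X^{\#}}\iota^{*}\omega_I=\iota^{*}d\langle\mu_1,X\rangle=0$ in Step~2 all use $\iota_{X^{\#}}\omega_I=d\langle\mu_1,X\rangle$ directly. Under the hypotheses actually stated in the paper (CR plus transversality), none of these conclusions is available, so the argument as written does not go through. To repair it you would need either to check that \cite{GPP} imposes the additional moment-map relations (and then drop the false ``equivalence'' claim and simply assume them), or to follow Joyce's construction of the reduced hypercomplex structure, which does not rely on a horizontal complement coming from the metric.
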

\section{strong HKT structures}
Let  $(I, J, K, g)$ be a HKT structure on a 4n-dimensional manifold $M$. Setting $$H:= Id\omega_I=Jd\omega_J=Kd\omega_K,$$
we have that the HKT structure is said strong if $dH=0$ and weak otherwise. This is equivalent to require that any of the three Hermitian structures $(g,I), \ (g,J), \ (g,K)$ is SKT (or pluriclosed). Observe that when $H$ is identically zero, then $\nabla^B=\nabla^{LC}$ and the metric is hyperk\"ahler. \\
\begin{definition}
Let $(M,I,J,K,g)$ be a hyperhermitian manifold. $(I,J,K,g)$ is said {\em balanced hyperhermitian}  if any of the three Hermitian structures is balanced.
\end{definition} 
It turns out that the balanced hyperhermitian condition is orthogonal (in a suitable sense) to the strong HKT one. In fact, a natural generalization of the Fino-Vezzoni conjecture holds true in the hypercomplex setting.
\begin{theorem} \cite{FusiGentili}
Let $(M,I,J,K,g)$ be a compact non-hyperK\"ahler strong HKT manifold. Then there is no balanced hyperhermitian metric on $(M,I,J,K)$.
\end{theorem}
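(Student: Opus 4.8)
\noindent\emph{Proof strategy.} The plan is to argue by contradiction. Suppose that, on the fixed hypercomplex manifold $(M,I,J,K)$, there exist both a strong HKT metric $g$ with $d\omega_I\neq 0$ (so that $(I,J,K,g)$ is non-hyperk\"ahler) and a balanced hyperhermitian metric $g'$, with fundamental forms $\omega'_I,\omega'_J,\omega'_K$. Write $\Omega=\omega_J+i\omega_K$ for the $(2,0)$-form of $g$ with respect to $I$; by Proposition~\ref{prop:HKT} we have $\partial_I\Omega=0$, and since $\Omega(X,JX)>0$ the $(2n,0)$-form $\Omega^n$ is nowhere vanishing and satisfies $\partial_I\Omega^n=0$. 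Strong HKT means in addition that $H=Id\omega_I$ is closed, i.e. $\partial_I\overline{\partial}_I\omega_I=0$, so that $(M,I,g)$ is SKT. The goal is to show that these hypotheses force $d\omega_I=0$, contradicting the choice of $g$. The quaternionic dimension $n=1$ is disposed of first and separately: in real dimension four a balanced metric is K\"ahler, so by Boyer's classification (see the Remark above and \cite{Bo}) a non-hyperk\"ahler strong HKT four-manifold must be of Hopf type, and a Hopf surface carries no K\"ahler metric; hence no balanced hyperhermitian metric can exist on the same $(I,J,K)$. From now on $n\ge 2$.

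For $n\ge 2$ I would work on the complex manifold $(M,I)$ of complex dimension $2n$, where $\omega:=\omega_I$ is SKT and $\omega':=\omega'_I$ is balanced, i.e. $d(\omega')^{2n-1}=0$. The engine of the argument is the integral
\[
\mathcal{I}:=\int_M \partial_I\omega\wedge\overline{\partial}_I\omega\wedge\Phi,
\]
where $\Phi$ is a suitable real form of bidegree $(2n-3,2n-3)$ manufactured from $\omega'$, from $\Omega^n$, and from the hyperhermitian structure. The first step is to choose $\Phi$ so that, by the Hodge--Riemann bilinear relations with respect to $\omega'$, the integrand is pointwise a nonnegative multiple of the volume form, vanishing exactly where $\partial_I\omega=0$; for the primitive part of the $(2,1)$-form $\partial_I\omega$ the natural choice $\Phi=(\omega')^{2n-3}$ already gives the correct sign. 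The second step is to show $\mathcal{I}=0$. Here the SKT condition enters through $\partial_I\omega\wedge\overline{\partial}_I\omega=\partial_I\bigl(\omega\wedge\overline{\partial}_I\omega\bigr)$, valid because $\partial_I\overline{\partial}_I\omega=0$; applying Stokes' theorem to $\omega\wedge\overline{\partial}_I\omega\wedge\Phi$ and using that its $(2n-1,2n+1)$-component vanishes for degree reasons yields $\mathcal{I}=\mp\int_M \omega\wedge\overline{\partial}_I\omega\wedge\partial_I\Phi$, so $\mathcal{I}=0$ as soon as $\Phi$ is $\partial_I$-closed. Combining $\mathcal{I}\ge 0$ with $\mathcal{I}=0$ forces $\partial_I\omega=0$, hence $d\omega=0$; a hyperhermitian metric that is K\"ahler with respect to $I$ is hyperk\"ahler, and the contradiction is reached.

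The delicate point, and the step I expect to be the main obstacle, is the simultaneous fulfilment of the two requirements on $\Phi$: it must be $\partial_I$-closed so that Stokes applies, yet it must render the integrand Hodge--Riemann nonnegative. These two demands pull in opposite directions precisely on the non-primitive (Lee-type) component of $\partial_I\omega$, and reconciling them is exactly the open part of the Fino--Vezzoni conjecture in the purely complex setting. The essential input must therefore be the full hyperhermitian, rather than merely complex, structure: I expect to exploit that $g'$ is balanced with respect to all three complex structures simultaneously, that $\Omega'=\omega'_J+i\omega'_K$ is a positive $(2,0)$-form with respect to $I$, and the resulting $Sp(n)$-reduction of the frame bundle, in order to control the residual term $\int_M\omega\wedge\overline{\partial}_I\omega\wedge\partial_I(\omega')^{2n-3}$ and the non-primitive part that obstructs positivity. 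A second, more technical subtlety that must be handled with care is that on a general HKT manifold the canonical bundle $K_I$ need not be holomorphically trivial, as the Hopf example already shows; one therefore cannot replace $\Omega^n$ by a $d$-closed holomorphic $(2n,0)$-form, and only the weaker relation $\partial_I\Omega^n=0$ is available, so the whole Stokes computation must be organized around $\partial_I$ rather than $d$.
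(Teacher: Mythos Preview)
The paper under review does not prove this theorem; it is quoted, with attribution, from \cite{FusiGentili}. There is therefore no ``paper's own proof'' to compare against, and what follows concerns only the internal soundness of your proposal.

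Your proposal contains a genuine gap, and you locate it yourself. After disposing of $n=1$ you work exclusively on the complex manifold $(M,I)$, where you have an SKT metric $\omega=\omega_I$ and a balanced metric $\omega'=\omega'_I$, and you try to show $d\omega=0$ via the integral $\mathcal{I}=\int_M\partial_I\omega\wedge\overline{\partial}_I\omega\wedge\Phi$. This is precisely the open Fino--Vezzoni conjecture, and the difficulty you describe---$\Phi=(\omega')^{2n-3}$ is $\partial_I$-closed but fails to give a sign on the non-primitive (Lee) part of $\partial_I\omega$, while any correction that restores positivity destroys $\partial_I\Phi=0$---is exactly the known obstruction. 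Nothing in your argument after that point is a proof step: the sentence ``I expect to exploit that $g'$ is balanced with respect to all three complex structures'' is a declaration of intent, not a mechanism, and at no place do $J$, $K$, $\Omega'$, or the hyperhermitian nature of $g'$ actually enter the computation. As written, the proposal reduces the theorem to an open problem and stops.

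The missing idea is to abandon the $(p,q)$-calculus on $(M,I)$ and pass to the quaternionic formalism on $\Lambda^{2\ast,0}_I$ with the anticommuting pair $\partial=\partial_I$ and $\partial_J=J^{-1}\overline{\partial}_IJ$. In that setting the hyperhermitian metrics are encoded by the positive $(2,0)$-forms $\Omega$ and $\Omega'$; HKT becomes $\partial\Omega=0$, the strong condition becomes a $\partial\partial_J$-closedness statement for $\Omega$, and the balanced hyperhermitian condition on $g'$ becomes a closedness condition on $(\Omega')^{n-1}$. The Stokes/positivity scheme then runs entirely among $(2k,0)$-forms paired against $\overline{\Omega}^{\,n}$, where the relevant positivity (the quaternionic analogue of Hodge--Riemann) holds without a primitive decomposition, so the obstacle you encountered simply does not arise. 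Your $n=1$ paragraph is correct but becomes superfluous in that framework.
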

The original definition of HKT structures assumed the strong HKT condition; however in literature it was dropped because of the lack of examples. In fact, up to know, the only known examples of compact (non-hyperk\"ahler) strong HKT manifolds are due to Joyce (\cite{Joyce}) and Barberis-Fino (\cite{Bafi}). Moreover, all of them are homogeneous. \\
In the non-compact case, a non-homogeneous example has been constructed in \cite{MV} in the following way
\begin{theorem}
Let $(M,I,J,K,g)$ be a compact strong HKT-manifold of real dimension $4$, and $E$ be a smooth complex vector bundle on $M$. De-note by $\cal{M}$ the moduli space of gauge-equivalence classes of anti-self-dual connections (instantons) on $E$. Then $\cal{M}$ is equipped with a natural strong HKT-structure.
\end{theorem}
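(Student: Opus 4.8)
The plan is to realise $\mathcal M$ as an infinite-dimensional HKT quotient, in the spirit of the gauge-theoretic constructions of hyperk\"ahler metrics on instanton moduli spaces, and then to verify separately that the resulting torsion $3$-form is closed. First I would fix a Hermitian metric on $E$ (so that $\operatorname{ad}E$ carries a fibre metric) and work with the suitable Sobolev completion of the affine space $\mathcal A$ of connections on $E$, modelled on $\Omega^1(M,\operatorname{ad}E)$, together with the gauge group $\mathcal G$. On $\mathcal A$ one has the translation-invariant $L^2$-metric $\mathbf G(a,b)=\int_M\langle a,b\rangle\,dV_g$ and the three translation-invariant almost complex structures $\hat I,\hat J,\hat K$ obtained by letting $I,J,K$ act on the $1$-form factor and trivially on $\operatorname{ad}E$. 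Since $I,J,K$ are pointwise $g$-isometries these satisfy the quaternionic relations and are $\mathbf G$-compatible, so $(\mathcal A,\hat I,\hat J,\hat K,\mathbf G)$ is a flat hyperhermitian Banach manifold; being flat, its Bismut connection equals its Levi--Civita connection, so it is trivially strong HKT. The gauge group acts affinely, preserving $\mathbf G$ and $\hat I,\hat J,\hat K$, hence by hypercomplex isometries.

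Next I would rewrite the anti-self-duality equation as the vanishing of an HKT moment map. In real dimension $4$ the bundle of self-dual $2$-forms of $(M,g)$ is $\Lambda^+=\R\,\omega_I\oplus\R\,\omega_J\oplus\R\,\omega_K$, so $F_A^+=0$ is equivalent to $\Lambda_IF_A=\Lambda_JF_A=\Lambda_KF_A=0$; in particular the instanton condition is independent of which complex structure of the hypercomplex family is used. Hence $\mu=(\mu_1,\mu_2,\mu_3)\colon\mathcal A\to\R^3\otimes\Omega^0(M,\operatorname{ad}E)$, $\mu_1(A)=\Lambda_IF_A$, $\mu_2(A)=\Lambda_JF_A$, $\mu_3(A)=\Lambda_KF_A$, is $\mathcal G$-equivariant with $\mu^{-1}(0)=\{A:\ F_A^+=0\}$. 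The point to check is that $\mu$ is an HKT moment map in the sense recalled above: since $d\mu_1|_A(a)=\Lambda_I(d_Aa)$, integration by parts against $\xi\in\Omega^0(M,\operatorname{ad}E)$ turns the Cauchy--Riemann condition $\hat I\,d\mu_1=\hat J\,d\mu_2=\hat K\,d\mu_3$ into a pointwise identity on $M$ for $d_A^*(\hat Ia)$, $d_A^*(\hat Ja)$, $d_A^*(\hat Ka)$, and the $I$-, $J$- and $K$-versions of this identity differ only by torsion terms which, by Proposition~\ref{prop:HKT}, are governed by the single $3$-form $H=Id\omega_I=Jd\omega_J=Kd\omega_K$ and hence coincide; transversality holds over the open set of irreducible instantons with $H^2_A=0$, where $\mathcal G$ acts freely and $d_A^+$ is surjective. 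Applying the HKT reduction theorem above to $\mu^{-1}(0)/\mathcal G$ (the non-compactness of $\mathcal G$ being handled, as usual in gauge theory, by the slice theorem near irreducible connections), together with the Atiyah--Hitchin--Singer elliptic package (the deformation complex $0\to\Omega^0(\operatorname{ad}E)\xrightarrow{d_A}\Omega^1(\operatorname{ad}E)\xrightarrow{d_A^+}\Omega^2_+(\operatorname{ad}E)\to0$ is elliptic, so near such an $A$ the moduli space $\mathcal M$ is a smooth manifold with $T_{[A]}\mathcal M=H^1_A=\ker d_A^*\cap\ker d_A^+\subset\Omega^1(M,\operatorname{ad}E)$, of dimension a multiple of $4$), one concludes that $\mathcal M$ carries a natural HKT structure. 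The computational heart of this step is that $\hat I,\hat J,\hat K$ preserve $H^1_A$: a form there is at once $d_A^*$- and $d_A^+$-closed, and the Hermitian version of the K\"ahler identities, with their torsion corrections, then forces the relevant $\Lambda_I$-, $\Lambda_J$-, $\Lambda_K$-projections to vanish, which is exactly what is needed for $\hat Ia$, $\hat Ja$, $\hat Ka$ to lie again in $H^1_A$.

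It remains to upgrade ``HKT'' to ``strong HKT'', and this is where the strongness of $M$ enters and where I expect the genuine difficulty to lie. The torsion $3$-form $H_{\mathcal M}$ of the reduced structure can be written as a fibre integral over $M$ of a density built from the universal instanton on $\mathcal M\times M$, the horizontal projection onto $H^1_A$, and the torsion $H_M=Id\omega_I$ of the base; differentiating under the integral sign, using the Bianchi identity for the tautological connection and Stokes' theorem in the $M$-direction (no boundary term, $M$ being closed), one finds that $dH_{\mathcal M}$ is assembled from $dH_M$, which vanishes by hypothesis, so $dH_{\mathcal M}=0$. A more complex-geometric route to the same end: fixing $I$, along $\mu^{-1}(0)$ the curvature has vanishing $(0,2)$-component, so $E$ acquires an $I$-holomorphic structure; since a strong HKT metric is in particular Gauduchon, the Kobayashi--Hitchin correspondence identifies $\mathcal M$ locally with the moduli space of $g$-stable holomorphic bundles, and transporting the pluriclosed (SKT) property of $(g,I)$ on $M$ through this identification and the $L^2$-metric shows that $(\mathbf g_{\mathcal M},\hat I)$ is pluriclosed; running the argument for $I$, $J$ and $K$ yields the strong HKT condition. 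Finally I would record the usual caveats: $\mathcal M$ is to be read as the smooth locus of irreducible anti-self-dual connections (those with $H^2_A=0$); compactness of $M$ is used to make the $L^2$-metric and the fibre integral defining $H_{\mathcal M}$ well defined and to invoke elliptic regularity; and the only genuinely four-dimensional ingredient is the identity $\Lambda^+=\langle\omega_I,\omega_J,\omega_K\rangle$, which is precisely what lets a single moduli space carry all three complex structures at once.
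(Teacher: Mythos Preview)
The paper does not supply a proof of this theorem: it is quoted, without argument, as a result of Moraru and Verbitsky \cite{MV}. There is therefore no proof in the paper against which to compare your proposal.

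On its own terms, the HKT part of your outline is in the right spirit, and in real dimension $4$ the Cauchy--Riemann identities you need do simplify (every hyperhermitian $4$-manifold is automatically HKT). The genuine gap is the passage from HKT to \emph{strong} HKT. The reduction theorem of \cite{GPP} that you invoke produces only an HKT structure on the quotient, and neither of the two routes you sketch for closedness of $H_{\mathcal M}$ is actually carried out. For the ``fibre-integral'' route you assert, but do not derive, a formula expressing $H_{\mathcal M}$ as a push-forward along $M$ of a density built from $H_M$; the torsion of a reduced metric picks up contributions from the second fundamental form of the horizontal distribution inside $\mu^{-1}(0)$ and from curvature of the tautological connection, and it is not evident that after differentiation these organise themselves into an integral of $dH_M$ alone. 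For the Kobayashi--Hitchin route, identifying $\mathcal M$ with a moduli space of stable bundles does not by itself explain why the $L^2$-metric on that space is pluriclosed; ``transporting the SKT property'' is exactly the statement that remains to be proved. In \cite{MV} this closedness is obtained by a direct curvature computation with the universal connection, and that computation is the substantive content missing from your proposal.
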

Except for the $4$-dimensional case, where each HKT manifold is strong HKT, very few is known about the geometry of strong HKT manifolds. \\
 In the locally homogeneous setting, HKT structures on manifolds have been studied in \cite{BDV,DF2,DF3,AB}. In particular, it is known that a nilmanifold and a almost abelian solvmanifold admit strong HKT structures if and only if they are hyper-K\"ahler (\cite{BDV,DF,AB}), but a general result on solvmanifolds is still unknown. \\
In the following sub-sections we describe the constructions of Joyce (\cite{Joyce}) and Barberis-Fino (\cite{Bafi}) in details. We recall that for both the examples the Bismut connection is flat. In the last section of this work, we will construct an example of a (non-homogeneous) compact manifold admitting a non Bismut flat strong HKT structure.
\subsection{Joyce's construction}
Let $G$ be a compact Lie group, which after a covering argument, we may assume to be semisimple. \\
Let then $G$ be a compact semisimple Lie group and let $H \subseteq G$ be a maximal torus in it. 
\begin{theorem} \cite{Joyce}
The Lie algebra $\g$ of $G$ decomposes as
\[
\g= \mathfrak{b} \oplus \bigoplus_{j=1}^m \mathfrak{d}_j  \oplus \bigoplus_{j=1}^m \mathfrak{f}_j 
\]
where $\mathfrak{b}$ is abelian and of dimension $\operatorname{rank} (G)-m$, $\mathfrak{d}_j \cong \mathfrak{su}(2)$ and $\mathfrak{f}_j$ satisfy the following conditions
\begin{enumerate}
\item $[\mathfrak{d}_j,\mathfrak{b}]=0$ and $\mathfrak{b} \oplus \bigoplus_{j=1}^m \mathfrak{d}_j $ contains the Lie algebra of $H$ 
\item $[\mathfrak{d}_j,\mathfrak{d}_i]=0$ for $i \neq j$
\item $[\mathfrak{d}_j,\mathfrak{f}_i]=0$ for $j < i$
\item $[\mathfrak{d}_j,\mathfrak{f}_j] \subseteq \mathfrak{f}_j$ where the Lie bracket action of $\mathfrak{d}_j$ on $\mathfrak{f}_i$ is isomorphic to the sum of a finite amount of copies of the action of $\mathfrak{su}(2)$ on $\C^2$ by left multiplication.
\end{enumerate}
\end{theorem}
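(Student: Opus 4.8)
Since the statement is purely Lie-theoretic, I would work with the compact Lie algebra $\g$ of $G$ and its complexification. Fix a maximal torus $H\subseteq G$ with Lie algebra $\mathfrak h$, let $\Delta$ be the root system of $\g^{\C}$ with respect to $\mathfrak h^{\C}$, and write $\g^{\C}=\mathfrak h^{\C}\oplus\bigoplus_{\alpha\in\Delta}\g^{\C}_{\alpha}$ for the root-space decomposition. The plan is to establish, by induction on $\dim\g$, the (slightly more flexible) statement that every compact \emph{reductive} Lie algebra admits a decomposition of the asserted shape with $\mathfrak b$ abelian, $\dim\mathfrak b=(\dim\mathfrak h)-m$, and $\mathfrak b$ containing the centre. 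If $[\g,\g]=0$ we take $\mathfrak b=\g$ and $m=0$. Otherwise, pick a simple ideal $\g_{1}\subseteq[\g,\g]$, let $\theta$ be the highest root of $\g_{1}$ for some choice of positive roots, and let $\mathfrak d_{1}:=\mathfrak{su}(2)_{\theta}\subseteq\g_{1}$ be the compact real form of the copy of $\mathfrak{sl}(2,\C)$ spanned by root vectors for $\pm\theta$ and the coroot $H_{\theta}$; thus $\mathfrak d_{1}\cong\mathfrak{su}(2)$.

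\textbf{The key root-string estimate.} The crux is to decompose $\g$ as a module over $\mathfrak d_{1}=\mathfrak{su}(2)_{\theta}$ under the adjoint action. Because $\theta$ is a highest root it is long in $\g_{1}$, and a standard root-system fact then gives $|\langle\gamma,\theta^{\vee}\rangle|\le 1$ for every root $\gamma\neq\pm\theta$ (otherwise the Cauchy--Schwarz-type bound $\langle\gamma,\theta^{\vee}\rangle\langle\theta,\gamma^{\vee}\rangle\le 3$ would be violated, using $|\langle\theta,\gamma^{\vee}\rangle|\ge|\langle\gamma,\theta^{\vee}\rangle|$ when $\theta$ is long). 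Consequently every $\theta$-string through a root $\gamma\neq\pm\theta$ has length at most $2$, so by complete reducibility $\g$ decomposes as an $\mathfrak{su}(2)_{\theta}$-module into copies of only the trivial module $V_{0}$, the adjoint module $V_{2}$, and the standard module $V_{1}\cong\C^{2}$. Moreover $V_{2}$ occurs with multiplicity one --- a second copy would correspond to a $\theta$-string of length $3$ --- and that copy is $\mathfrak d_{1}$ itself, while the $V_{0}$-isotypic part is exactly the centraliser $\mathfrak c:=C_{\g}(\mathfrak d_{1})$. Hence $\g=\mathfrak c\oplus\mathfrak d_{1}\oplus\mathfrak f_{1}$ as a direct sum of $\mathfrak{su}(2)_{\theta}$-modules, where $\mathfrak f_{1}$ is the $V_{1}$-part; since $V_{1}$ is of quaternionic type, $\mathfrak f_{1}$ is a sum of copies of the real $4$-dimensional module $\C^{2}=\mathbb H$ with $\mathfrak{su}(2)$ acting by left multiplication, $\mathfrak f_{1}\subseteq\g_{1}$, and $[\mathfrak d_{1},\mathfrak f_{1}]\subseteq\mathfrak f_{1}$.

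\textbf{Induction and the conditions (1)--(4).} Now $\mathfrak c=C_{\g}(\mathfrak d_{1})$ is a compact reductive Lie algebra with $\dim\mathfrak c<\dim\g$ --- it is the direct sum of the centre of $\g$, the simple ideals of $[\g,\g]$ other than $\g_{1}$, and $C_{\g_{1}}(\mathfrak d_{1})$ --- so by the inductive hypothesis $\mathfrak c=\mathfrak b\oplus\bigoplus_{j\ge 2}\mathfrak d_{j}\oplus\bigoplus_{j\ge 2}\mathfrak f_{j}$ with all the required properties. Adjoining $\mathfrak d_{1},\mathfrak f_{1}$ gives the decomposition of $\g$. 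The conditions (1)--(4) are then verified by nested-centraliser bookkeeping: everything produced at stages $j\ge 2$ lies inside $\mathfrak c=C_{\g}(\mathfrak d_{1})$, so $\mathfrak d_{1}$ commutes with every later $\mathfrak d_{j}$, $\mathfrak f_{j}$ and with $\mathfrak b$; running this at each stage yields $[\mathfrak d_{j},\mathfrak b]=0$, $[\mathfrak d_{j},\mathfrak d_{i}]=0$ for $i\neq j$, and $[\mathfrak d_{j},\mathfrak f_{i}]=0$ for $j<i$, while $[\mathfrak d_{j},\mathfrak f_{j}]\subseteq\mathfrak f_{j}$ and the $\C^{2}$-description are exactly what was built into each $\mathfrak f_{j}$. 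Finally, each $\mathfrak d_{j}$ contains the coroot line $\mathbb R\,iH_{\theta_{j}}$; since $\theta_{j}$ is picked successively inside the centraliser of the earlier $\mathfrak d_{i}$'s, the coroots $\theta_{j}^{\vee}$ are mutually orthogonal, hence linearly independent, and $\mathfrak b$ is their orthogonal complement inside $\mathfrak h$; therefore $\dim\mathfrak b=\operatorname{rank}(G)-m$ and $\mathfrak b\oplus\bigoplus_{j}\mathfrak d_{j}\supseteq\mathfrak h$, which gives the remaining part of (1).

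\textbf{Main obstacle.} I expect the real content to sit in the root-string estimate of the second step: that the $\mathfrak{su}(2)$ attached to a long (equivalently, highest) root acts on all of $\g$ with no summand beyond $V_{0},V_{1},V_{2}$, and with $V_{2}$ of multiplicity one. This is precisely what forces $\mathfrak f_{1}$ to be a direct sum of copies of $\C^{2}$ and makes (3)--(4) hold, and it is where the distinction between long and short roots genuinely matters --- for a short root one would in general pick up a $3$-dimensional summand and the conclusion would fail. By contrast, identifying the compact real forms (of $\mathfrak{sl}(2,\C)_{\theta}$, and of the conjugate pairs of $V_{1}$'s that assemble into quaternionic $\mathbb H$-summands) and carrying out the formal verification of (1)--(4) via nested centralisers are routine.
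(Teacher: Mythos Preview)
The paper does not give a proof of this statement: it is quoted verbatim from Joyce \cite{Joyce} as background for the construction of the invariant hypercomplex structure, with no argument supplied. So there is nothing in the paper to compare your proposal against.

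That said, your sketch is correct and is in fact Joyce's original argument: induction on $\dim\g$, at each step peeling off the $\mathfrak{su}(2)$ attached to a highest root $\theta$, using that $\theta$ is long to bound $|\langle\gamma,\theta^{\vee}\rangle|\le 1$ for $\gamma\neq\pm\theta$, and hence decomposing $\g$ under $\ad(\mathfrak{su}(2)_\theta)$ into trivial, adjoint, and standard summands only. One minor imprecision: the multiplicity-one of $V_2$ is not really because ``a second copy would correspond to a $\theta$-string of length $3$'', but simply because the weight-$2$ space is $\g_\theta^{\C}$, which is one-dimensional; your conclusion is unaffected. The rank bookkeeping at the end (that $\ker\theta\subset\mathfrak h$ is a Cartan of $\mathfrak c=C_\g(\mathfrak d_1)$, so $\operatorname{rank}\mathfrak c=\operatorname{rank}G-1$) is the point that makes $\dim\mathfrak b=\operatorname{rank}G-m$ come out right, and you have it.
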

Let $\torus^{2m-r}$, with $r=\operatorname{rank} (G)$, be the flat torus, so that the Lie algebra of $\torus^{2m-r} \times G$ decomposes as
\[
\R^{2m-r} \oplus \g= \R^m \oplus \bigoplus_{j=1}^m \mathfrak{d}_j  \oplus \bigoplus_{j=1}^m \mathfrak{f}_j.
\]
Let $(e_1,\dots,e_m)$ be a basis of $\R^m$ and ket $\varphi_j$ be the Lie algebra isomoprhism $\varphi: \mathfrak{su}(2) \to \mathfrak{d}_j$, where we assume that $\mathfrak{su}(2)$ is endowed with the basis $(i_1,i_2,i_3)$ satisfying \[[i_1,i_2]=2i_3, \ [i_2,i_3]=2i_4, \ [i_3,i_4]=2i_1.\] 
The definition of the hypercomplex structure is the following \\
\begin{enumerate}
\item $I_1,I_2,I_3$ act on $\R^m \oplus \bigoplus_{j=1}^m \mathfrak{d}_j $ as
\[ I_a(e_j)=\varphi_j(i_a), \ \ I_a(\varphi_j(i_a)=-e_j, \ \ I_a(\varphi_j(i_b)=\varphi_j(i_c), \ \ I_a(\varphi_j(i_c)=-\varphi_j(i_b),\] 
where $(a,b,c)$ is an even permutation of $(1,2,3)$. \\
\item $I_1,I_2,I_3$ acts on $\mathfrak{f}_j$ as $I_a(v)=[\varphi_j (i_a),v]$, for any $v \in \mathfrak{f}_j$.
\end{enumerate}
By definition is clear that $(I_1,I_2,I_3)$ is an hypercomplex structure on $\torus^{2m-r} \times G$, which by \cite{sam} is integrable. Actually, all the invariant hypercomplex structures on compact Lie groups are obtained by this construction.\\
Furthermore, the (opposite) of the Killing Cartan form of $\g$ extends to a hyperhermitian bi-invariant metric $b$ on $\torus^{2m-r} \times G$ (\cite{GP}), which is HKT. In fact, using that $I_a$ are integrable complex structures and $b$ is bi-invariant, one gets that
\[
d^c_a\ \omega_a (X,Y,Z)=-b([X,Y],Z),
\]
for any $X,Y,Z$ left invariant vector fields. In addition, since $d^c_a\omega_a$ are bi-invariant $3$-forms on $\mathbb{T}^{2m-r}\times G$, they must be closed, implying that the HKT structure $(I_a,b)$ is strong HKT.  The remark that the metric is HKT is originally due to Opfermann and Papadopoulos \cite{OP}, who also generalized the construction to certain homogeneous spaces, showing that they carry HKT structures.
\subsection{Barberis-Fino construction}
Let $\g$ be a $4n$-dimensional Lie algebra endowed with an hypercomplex structure $(I,J,K)$ and let $\rho:\g \to \mathfrak{gl}(k,\quat)$ be a Lie algebra homomorphism. $T_\rho \g:=\g \ltimes_\rho \quat$ has the structure of Lie algebra with Lie bracket
\[
[(X,U),(Y,V)]=[([X,Y]),	\rho_X(V)-\rho_Y(U)],
\]
where $X,Y \in \g$ and $U,V \in \quat^k$.\\
The hypercomplex structure on $\g$ induces an hypercomplex structure on $T_\rho \g$ 
\[
\tilde I (X,U):=(IX, iU), \  \ \tilde J (X,U):=(JX, jU), \ \  \tilde K (X,U):=(KX, kU),
\]
where $i,j,k$ are the quaternionic units of $\quat$. Furthermore, given an hyperhermitian metric $g$ on $\mathfrak{g}$, $g$ and the standard metric on $\quat^k$ induce a hyperhermitian metric $\tilde g$ on $T_\rho \g$ in such a way $\g$ and $\quat^k$ are orthogonal.
\begin{theorem}[\cite{Bafi}]
When $\rho:\g \to \mathfrak{sp}(k)$, the Lie algebra $(T_\rho \g, \tilde I, \tilde J, \tilde K, \tilde g)$ is strong HKT if and only if $(\g,I,J,K,g)$ is strong HKT.
\end{theorem}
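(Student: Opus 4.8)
The plan is to push everything through the projection $\pi\colon T_\rho\g\to\g$, $(X,U)\mapsto X$. This map is a surjective Lie algebra homomorphism, so $\pi^{*}$ is an injective cochain map between the Chevalley--Eilenberg complexes, and by construction $\pi\circ\tilde I=I\circ\pi$, $\pi\circ\tilde J=J\circ\pi$, $\pi\circ\tilde K=K\circ\pi$. Viewing $T_\rho\g=\g\oplus\quat^{k}$ as a vector space, I would decompose the fundamental form of $\tilde I$ as $\tilde\omega_{I}=\pi^{*}\omega_{I}+\omega_{I}^{0}$, where $\omega_{I}^{0}\in\Lambda^{2}(\quat^{k})^{*}$ is the $2$-form $(U,V)\mapsto\langle iU,V\rangle$ extended by zero on $\g$, and similarly for $J$ and $K$. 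Since $\pi^{*}$ commutes with $d$, the whole computation reduces to evaluating the Chevalley--Eilenberg differential of $\omega_{I}^{0}$ on the Lie algebra $T_\rho\g$.

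First I would run through the types of triples of elements of $\g\oplus\quat^{k}$. A triple with at least two entries in $\g$ contributes zero, because then the brackets appearing in the Chevalley--Eilenberg formula have vanishing $\quat^{k}$-component while $\omega_{I}^{0}$ only pairs vectors of $\quat^{k}$; a triple inside $\quat^{k}$ contributes zero since $\quat^{k}$ is abelian. The only remaining case is $\bigl((X,0),(0,V),(0,W)\bigr)$ with $X\in\g$ and $V,W\in\quat^{k}$, for which
\[
d\omega_{I}^{0}\bigl((X,0),(0,V),(0,W)\bigr)=-\langle i\,\rho_{X}V,W\rangle+\langle i\,\rho_{X}W,V\rangle.
\]
This is exactly where the hypothesis $\rho(\g)\subseteq\mathfrak{sp}(k)$ is used: $\rho_{X}$ commutes with the action of the quaternionic unit $i$ on $\quat^{k}$ and is skew with respect to the standard inner product, and $i$ is itself skew, and these facts together force $\langle i\,\rho_{X}V,W\rangle=\langle i\,\rho_{X}W,V\rangle$, so the right-hand side vanishes. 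The same computation with $i$ replaced by $j$ or $k$ gives $d\tilde\omega_{I}=\pi^{*}(d\omega_{I})$, $d\tilde\omega_{J}=\pi^{*}(d\omega_{J})$ and $d\tilde\omega_{K}=\pi^{*}(d\omega_{K})$.

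From here the argument is formal. Using the operator conventions of the paper together with $\pi\circ\tilde I=I\circ\pi$ one obtains $\tilde I\,d\tilde\omega_{I}=\pi^{*}(I\,d\omega_{I})$, and likewise for $J$ and $K$; since $\pi^{*}$ is injective, the three $3$-forms $\tilde I\,d\tilde\omega_{I}$, $\tilde J\,d\tilde\omega_{J}$, $\tilde K\,d\tilde\omega_{K}$ coincide on $T_\rho\g$ if and only if $I\,d\omega_{I}$, $J\,d\omega_{J}$, $K\,d\omega_{K}$ coincide on $\g$. Since $\tilde I,\tilde J,\tilde K$ are integrable (this is part of the Barberis--Fino construction recalled above, and also a short Nijenhuis computation using that $\rho_{X}$ commutes with the quaternionic units) and $\tilde g$-skew (immediate from the skew-symmetry of $i,j,k$ with respect to the standard metric on $\quat^{k}$), Proposition \ref{prop:HKT} then shows that $(T_\rho\g,\tilde I,\tilde J,\tilde K,\tilde g)$ is HKT if and only if $(\g,I,J,K,g)$ is HKT. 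In that case the Bismut torsion of $T_\rho\g$ equals $\tilde H=\tilde I\,d\tilde\omega_{I}=\pi^{*}H$, where $H=I\,d\omega_{I}$ is the Bismut torsion of $\g$; because $\pi$ is a Lie algebra homomorphism, $d\tilde H=\pi^{*}(dH)$, and injectivity of $\pi^{*}$ gives $d\tilde H=0$ if and only if $dH=0$. Chaining the two equivalences yields the claim.

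I expect the real obstacle to be the identity $d\omega_{I}^{0}=0$: it is short, but it is the only point at which the reduction of the structure group of $\quat^{k}$ to $Sp(k)$ is genuinely used, and carrying it out requires keeping the quaternionic bookkeeping straight --- in particular which quaternion multiplication $\rho_{X}$ commutes with, and the signs of the various skew-symmetric operators. Everything after that is formal manipulation of the injective cochain map $\pi^{*}$, resting on Proposition \ref{prop:HKT} and on the integrability and metric compatibility of $\tilde I,\tilde J,\tilde K$ that are built into the construction above.
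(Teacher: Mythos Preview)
The paper does not give its own proof of this theorem: it is stated with the citation \cite{Bafi} and immediately followed by an example, so there is nothing in the paper to compare your argument against. That said, your proof is correct and is essentially the argument of the cited reference. The key identity $d\omega_{I}^{0}=0$ is exactly the place where $\rho(\g)\subseteq\mathfrak{sp}(k)$ enters, and your verification of it (commutation of $\rho_{X}$ with $i$ together with skewness of both $\rho_{X}$ and $i$) is the standard one; the remaining reduction via the injective cochain map $\pi^{*}$ and Proposition~\ref{prop:HKT} is routine. One small point worth making explicit for the reader is why the mixed type $(X,0),(Y,0),(0,W)$ gives zero: the bracket $[(X,0),(Y,0)]=([X,Y],0)$ lies in $\g$, and the other two brackets lie in $\quat^{k}$, but in every term of the Chevalley--Eilenberg formula one of the two arguments of $\omega_{I}^{0}$ is in $\g$, so each term vanishes separately --- your sentence ``the brackets appearing \dots have vanishing $\quat^{k}$-component'' is not literally accurate for this case, although the conclusion is.
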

We exhibit an explicit example. By Joyce's construction the Lie algebra $\mathfrak{sp}(1) \oplus \R$ admits a strong HKT structure. Let $\rho:\R \oplus\mathfrak{sp}(1) \to \mathfrak{sp}(1)$ be a Lie algebra homomorphism defined as follows: fixed the standard basis $\{e_1,e_2,e_3\}$ basis of $\mathfrak{sp}(1)$ and a generator $e_4$ of $\R$,
\[
\rho(e_1):=\frac{1}{2}\begin{pmatrix}
0 & 0 & 0 & -1 \\
0 & 0 & -1 & 0 \\
0 &1 & 0 & 0 \\
1 & 0 & 0 & 0 \\
\end{pmatrix}, \ \rho(e_2):=\frac{1}{2}\begin{pmatrix}
0 & 0 & 1 & 0 \\
0 & 0 & 0 & -1 \\
-1 &0 & 0 & 0 \\
0 & 1 & 0 & 0 \\
\end{pmatrix}, \ 
 \rho(e_3):=\frac{1}{2}\begin{pmatrix}
0 & -1 & 0 & 0 \\
1 & 0 & 0 & 0 \\
0 &0 & 0 & -1 \\
0 & 0 & 1 & 0 \\
\end{pmatrix}, 
\]
and $\rho(e_4):=0$.\\
As a consequence of the Theorem, the $8$-dimensional Lie algebra $T_\rho \g$ with structure equations 
\[
\begin{split}
&[e_1,e_2]=e_3, \ [e_2,e_3]=e_1, \ [e_3,e_1]=e_2, \\
&[e_1,e_8]=-[e_2,e_7]=[e_3,e_6]=\frac{1}{2} e_5, \\
&[e_1,e_7]=[e_2,e_8]=-[e_3,e_5]=-\frac{1}{2} e_6,\\
&[e_1,e_6]=-[e_2,e_5]=-[e_3,e_8]=\frac{1}{2} e_7,\\
&[e_1,e_5]=[e_2,e_6]=-[e_3,e_7]=\frac{1}{2} e_8,
\end{split}
\]
admits a strong HKT structure. The corresponding connected and simply connected Lie group is $ SU(2) \ltimes \R^4 \times \R$. It has been shown in \cite{FT} that the $7$-dimensional Lie group $SU(2) \ltimes \R^4$ admits a compact quotient $M^7$, giving an  example of  strong HKT  $8$-dimensional manifold $M^7 \times S^1$. We recall here the construction of the lattice. \\
Let $\beta$ be a $p$-th rooth of the unity with $p$ prime and let us consider the discrete subgroup of $SU(2)$ generated by the matrix
\[
A_\beta:=\begin{pmatrix}
\exp(i\beta) & 0 \\
0 & \exp(-i\beta)
\end{pmatrix}.
\] 
The set $\Gamma_\beta:=\langle A_\beta \rangle \ltimes \Z^4$ is then a closed and discrete subgroup of $SU(2) \ltimes \R^4$. In particular, fixed any point $(A',q')$ of $SU(2) \ltimes \R^4$, then $[(A',q')]=[(A',q'+r)]$ where $r \in \Z^4$, and so the restriction of $\pi: SU(2) \ltimes \R^4 \to (SU(2) \ltimes \R^4)/ \Gamma_\beta$ to  $SU(2) \times [0,1]^4$ is surjective. This proves that $(SU(2) \ltimes \R^4)/ \Gamma_\beta$ is compact. \\
 We point out that this example, and each example constructed in this way, is Bismut flat. Indeed, the strong HKT metric on the universal cover is given by $\sum_{i=1}^8 (e^{i})^{2}$, which is not bi-invariant on  $SU(2) \ltimes \R^4 \times \R$, but it is bi-invariant on $SU(2) \times \R \times \R^4 $ with the standard Lie group structure.

\section{Construction of strong HKT manifolds via mapping tori}
We start by recall the following
\begin{definition}
Let $M$ be a smooth manifold and let $f \in \operatorname{Diff}(M)$. The  {\em mapping torus}  $M_f$ is the smooth manifold 
\[
\bigslant{M\times \R} { \Z},
\]
where the action of $\Z$ on $M\times \R$ is given by $n \cdot (p,x)=(f^n (p),x+n)$, for any $n \in \Z, \ p \in M$ and $ x \in \R$. 
\end{definition}
By \cite{TS}, any compact manifold endowed with a non vanishing closed $1$-form has the structure of a mapping torus. In particular, any solvmanifold is a mapping torus. 
\begin{definition} [\cite{Li}]
A {\em co-K\"ahler} manifold is a mapping torus $M_\psi$, where $(M,J,g)$ be a K\"ahler manifold, and $\psi$ is a K\"ahler isometry of $(J,g)$.
\end{definition}
Co-K\"ahler manifolds are the odd counterpart of K\"ahler manifolds, as they share some topological properties, e.g. for instance the formality in the sense of Sullivan \cite{CLM}. \\
When a co-K\"ahler manifold is obtained by a mapping torus of an hyperk\"ahler manifold $(M,I,J,K,g)$ via an hyperk\"ahler isometry, then it is said a co-hyperk\"ahler manifold. \\
It is well known that when the hyperk\"ahler manifold is a K3 surface, the hyperk\"ahler metric is in general not explicit. Henceforth, determining an explicit non-trivial hyperk\"ahler isometry is challenging. Here we exhibit an explicit example.\\
\begin{example} \label{example:hkisometry}
The Fermat quartic is the complex surface in $\CP^3$ defined by the equation 
\[
\cal{F}=\{z_0^4+z_1^4+z_2^4+z_3^4=0\},
\]
where $z_0,z_1,z_2,z_3$ are the standard homogeneous coordinates on $\CP^3$. It is well known, that it is a K3 surface. We will always assume that  $\cal{F}$ is endowed with the standard complex structure $I$, i.e., $I$ is such that $\iota: \cal{F} \to \CP^3$ is holomorphic, where $\iota$ is the natural embedding. Let $\iota^*\omega_{FS}$ be the K\"ahler metric on $\cal{F}$ induced by the Fubini-Study metric on $\CP^3$ and define $g$ to be the unique K\"ahler Ricci flat metric on $\cal{F}$ whose K\"ahler form in cohomologous to $\iota^*\omega_{FS}$, which is hyperk\"ahler. We will denote by $(I,J,K,g)$ the hyperk\"ahler structure. \\
The explicit form of $g$ is not known, nevertheless, in \cite{AG} the group of (not necessarily holomorphic) isometries of $g$ is characterized. In particular, $\operatorname{Isom}(g)$ is identified with the group of all holomorphic and antiholomoprhic isometries of $\CP^3$ which preserve $\cal{F}$. \\
Let $\sigma:\CP^3 \to \CP^3$ be defined by $[z_0,z_1,z_2,z_3] \mapsto [-z_0,-z_1,z_2,z_3]$. By the identification above $\psi:=\sigma_ {|\cal{F}}$ is clearly an isometry of $(\cal{F},g)$, and it is straightforward to prove that $\psi$ is homolorphic with respect to $I$.  Hence, $\psi \in \operatorname{Isom}_{hol}(g)$. It remains to prove that $\psi$ is an hyperk\"ahler isometry, i.e., that $\psi$ preserves the fundamental forms $\omega_J=gJ$ and $\omega_K=gK$. \\
Since $\cal{F}$ is a $K3$ surface, up to complex scalar multiples, there exists a unique $(2,0)$ holomorphic form $\Omega$ on $\cal{F}$. The fixed points of $\sigma$ in $\CP^3$ are the lines $z_0=z_1=0$ and $z_2=z_3=0$, and any of them intersects the Fermat quartic in four isolated fixed points. Since the fixed loci of $\psi$ is given by eight isolated and distinct points, $\psi$ is symplectic by \cite{Nik}. In fact, since $\psi$ is a holomorphic involution of $\cal{F}$ then either it preserves $\Omega$ or takes $\Omega$ in $-\Omega$. So, it suffices to observe that the second case leads to a contradiction. \\
Let $p$ be any of the eight fixed points of $\psi$. Then the eigenvalues of $d\psi$ at $T_p \cal{F}$ are $\pm 1$, since it is a involution. However, if there is an eigenspace with eigenvalue $1$, then the geodesics tangent to this eigenspace would be also fixed by $\psi$, since it is an isometry. But this contradicts the fact that the fixed points are isolated. Therefore, $d\psi=-Id$ on $T_p \cal{F}$ and $\psi^* \Omega=\Omega$. \\
Let $k \in \C^*$ be such that $\abs{k}^2 \Omega \wedge \overline{\Omega}=2 \omega_I^2$. Then, $\omega_J=\operatorname{Re}(k\Omega)$ and  $\omega_K=\operatorname{Im}(k\Omega)$. Since $\psi^*\Omega=\Omega$, $\psi^* (k\Omega)=k\Omega$.\\
To conclude the proof, we observe that since $\psi$ is holomorphic
\[
\operatorname{Re}(k\Omega)+i \operatorname{Im}(k\Omega)=k\Omega=\psi^*(k\Omega)=\psi^*(\operatorname{Re}(k\Omega)+i \operatorname{Im}(k\Omega))=\psi^*\operatorname{Re}(k\Omega)+i \psi^*\operatorname{Im}(k\Omega), \\
\]
from which follows that $\psi^*\omega_J=\omega_J$ and $\psi^*\omega_K=\omega_K$.
\end{example}
The mapping torus construction turns out to be particularly adaptable to provide examples of complex manifolds endowed with special Hermitian metrics (\cite{BF, BFG,FGV}).  In the next Theorem we show that the product of a co-hyperk\"ahler manifold with the $3$-sphere $S^3$ admits a strong HKT metric. 
\begin{theorem} \label{theorem:shkt}
Let $(N,I,J,K,g)$ be a compact hyperk\"ahler manifold let $\psi$ be a hyperk\"ahler isometry. Then, the product $ N_\psi \times S^3$ admits a strong HKT structure.
\end{theorem}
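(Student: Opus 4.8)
The plan is to exhibit an explicit hyperhermitian structure on $N_\psi \times S^3$ and verify it is HKT and strong using Proposition \ref{prop:HKT}. Recall that $S^3 \cong SU(2)$ carries a bi-invariant metric and an invariant hypercomplex structure (the $n=1$ case of Joyce's construction), and that mapping tori are built from a closed $1$-form. Writing $\theta$ for the canonical closed $1$-form on $N_\psi$ coming from the $\R$-factor and $\xi$ for its dual vector field, the manifold $N_\psi$ carries the $\psi$-invariant data $I,J,K,g$ pushed down from $N\times\R$ together with the extra direction $\xi$. I would first note that on $N_\psi \times S^3$ there is a natural $4$-dimensional "transverse" distribution spanned by $\xi$ and the three invariant vector fields $X_1,X_2,X_3$ on $SU(2)$ (normalized so that $[X_a,X_b]=2X_c$ for cyclic $(a,b,c)$, matching the $\mathfrak{su}(2)$ conventions in the excerpt), on which one puts the flat quaternionic triple $I_a\xi = X_a$, $I_a X_a = -\xi$, $I_a X_b = X_c$; on the remaining directions (the tangent space to the hyperkähler fiber $N$) one uses $I,J,K$. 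The metric is $g \oplus (\theta^2 + b_{S^3})$ where $b_{S^3}$ is the bi-invariant metric; this is hyperhermitian by construction since $\psi$ preserves $(I,J,K,g)$.

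Next I would check integrability and the HKT condition. Integrability of the three complex structures: on the hyperkähler part it is inherited from $N$; the cross terms vanish because $\xi$ and the $X_a$ commute with (the lifts of) the vector fields on $N$ — $\psi$ being an isometry means the pushed-down $I,J,K$ are $\xi$-parallel, i.e. $\mathcal{L}_\xi I = 0$ etc., so no Nijenhuis terms mixing the two blocks appear; on the $SU(2)$ part integrability is Joyce's (Samelson's) result. For the HKT condition I would compute $d^c_a\omega_a$ blockwise. On $N$, since $g$ is hyperkähler, $d\omega_I=d\omega_J=d\omega_K=0$, so that block contributes nothing. On the transverse block, $\omega_a$ restricted there equals $\theta\wedge\beta_a + (\text{the invariant } 2\text{-form on } SU(2))$ for suitable left-invariant $1$-forms $\beta_a$ dual to $X_a$; because $d\theta=0$ and the $SU(2)$ piece is handled exactly as in Joyce's computation $d^c_a\omega_a(X,Y,Z)=-b([X,Y],Z)$, one gets $H := d^c_a\omega_a = -\,b_{S^3}([\,\cdot,\cdot\,],\cdot)$, the Cartan $3$-form of $SU(2)$, independent of $a$ — this verifies condition (\ref{item:torsion}) of Proposition \ref{prop:HKT}, hence the structure is HKT. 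Finally, $H$ is the bi-invariant Cartan $3$-form on $SU(2)$, which is closed, so $dH=0$ and the HKT structure is strong.

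The main obstacle I anticipate is the bookkeeping at the gluing: one must be careful that $\psi$ being a \emph{hyperkähler} isometry (not merely an isometry) is exactly what makes $I,J,K$ descend to $N_\psi$ as honest $\xi$-invariant tensors, so that the candidate triple on $N_\psi\times S^3$ is globally well-defined and the mixed Nijenhuis and $d\omega$ terms genuinely vanish; Example \ref{example:hkisometry} is presumably invoked precisely to guarantee such $\psi$ exist. A secondary point is to confirm that the resulting manifold is \emph{not} Bismut flat and not a product of a hyperkähler manifold with a Lie group: the torsion $H$ is parallel (it is the bi-invariant Cartan form, which is $\nabla^B$-parallel on the $SU(2)$ factor and the $N$-block torsion is zero) but the full metric on $N_\psi\times S^3$ fails to be bi-invariant because the monodromy $\psi$ is nontrivial, so the holonomy argument separating this from the homogeneous examples goes through as in the Barberis--Fino discussion; I would close by remarking that taking $N$ to be the Fermat quartic K3 with $\psi$ as in Example \ref{example:hkisometry} yields a genuinely non-homogeneous compact instance.
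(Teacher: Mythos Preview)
Your argument is correct and is, at bottom, the same idea as the paper's: build a product strong HKT structure on the $\Z$-cover $N\times(\text{4-dimensional strong HKT factor})$ and check that the $\Z$-action $(p,\cdot)\mapsto(\psi(p),\cdot)$ preserves it because $\psi$ is a hyperk\"ahler isometry. The difference is only in the model for the four-dimensional factor. You work with $\R\times SU(2)$, equip it with the $n=1$ Joyce hypercomplex structure and the bi-invariant metric, and verify integrability and condition~(\ref{item:torsion}) by hand, obtaining $H$ as the Cartan $3$-form of $SU(2)$. The paper instead identifies $N_\psi\times S^3$ with $\bigl(N\times(\C^2\setminus\{0\})\bigr)/\Z$, where $\Z$ acts by $(p,\underline{x})\mapsto(\psi(p),2\,\underline{x})$, and takes on $\C^2\setminus\{0\}$ the standard hypercomplex triple together with the Hopf metric $b=\tfrac{1}{|z|^2}\sum dz_i\,d\bar z_i$; the strong HKT property of $(\C^2\setminus\{0\},b)$ is then simply quoted, so no direct integrability or torsion computation is needed. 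Under the diffeomorphism $(p,q,t)\mapsto(p,2^t q)$ the two models coincide, so the approaches are equivalent; the paper's packaging is shorter, while yours makes the torsion $H$ visibly equal to the closed bi-invariant $3$-form on $S^3$. Your last paragraph (non-Bismut-flatness, non-product) goes beyond the statement of the theorem; in the paper those points are handled separately in Example~\ref{ex:1} and the surrounding remarks.
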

\begin{proof}
We first observe that $M_\psi \times S^3$ is diffeomorphic to the mapping torus $M_f=(N \times S^3)_{(\psi,Id)}$. On the other hand, $M_f \cong \bigslant{N\times \C^2\setminus\{(0,0)\} }{ \Z} $, where  the action of $\Z$ on $N \times \C^2 \setminus \{(0,0)\}$ is given by
\begin{equation} \label{eqn:Zaction}
 n \cdot (p, \underline{x}) \mapsto (\psi^n (p), 2^n \underline{x}).
\end{equation}
The identification above is provided by the map 
$$
\alpha: N \times \Sph^3 \times \R \to N \times \C^2 \setminus \{(0,0)\}, \quad 
           (p, q, t)  \mapsto (p, 2^t \cdot q)
$$
with inverse
$$
\alpha^{-1}: N \times \C^2 \setminus \{(0,0)\} \to N \times \Sph^3 \times \R, \quad 
           (p, \underline{x}) \mapsto (p, \frac{\underline{x}}{\norm{\underline{x}}}, \log_2 \norm{\underline{x}})
$$
induced on the quotients. \\
Hence, it suffices to prove that $\bigslant{N \times \C^2\setminus\{(0,0)\} }{ \Z} $ admits a strong HKT structure. 
Let us fix on $N \times \C^2 \setminus \{(0,0)\}$ the product hypercomplex structure $(I \times I_-, J \times J_-, K \times K_-)$, where $I_-$ is the standard complex structure on $\C^2 \setminus \{(0,0)\}$ with associated holomorphic coordinates $(z^1,z^2)$, $J_-$ is defined by $J_- (dz^1)=-d\overline{z}^2$ and $K_-=I_-J_-$. \\
We consider the product metric $g+b$, where $b:=\frac{1}{\abs{z}} (dz_1d \overline{z}_1 +  dz_2  d \overline{z}_2)$. It is well known that $(I_-,J_-,K_-, b)$ is a strong HKT structure on  $ \C^2\setminus\{(0,0)\} $, and therefore, $(I \times I_-,J \times J_-,K \times K_-, g+b)$ is a strong HKT structure on $N \times  \C^2\setminus\{(0,0)\} $. \\
Finally, since the automorphisms $\phi_n$ associated to the action \eqref{eqn:Zaction} are manifestly holomorphic with respect to each complex structure and they preserve the product metric $g+g_-$, the strong HKT structure descend to the quotient, concluding the proof of the Theorem.
\end{proof}
We recall the following definition
\begin{definition}
Let $M$ be a $4n$-dimensional manifold endowed with a pair of HKT structures $(I,J,K,g)$ and $(I',J',K',g)$. The structure $(I,J,K,I',J',K',g)$ is said to be \emph{generalized hyperk\"ahler} if the Bismut torsions $H$ and $H'$ of $(I,J,K,g)$ and $(I',J',K',g)$ satisfy $H=-H'$ and $dH=dH'=0$. 
\end{definition}
\begin{proposition}
The manifold $N_\psi \times S^3$ admits a generalized Hyperk\"ahler structure. 
\end{proposition}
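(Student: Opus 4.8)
\emph{Proof proposal.} The plan is to re-run the proof of Theorem~\ref{theorem:shkt}, equipping the factor $\C^2\setminus\{(0,0)\}$ with a \emph{second} strong HKT structure that shares the metric $b$ but has the opposite Bismut torsion. Identify $\C^2\setminus\{(0,0)\}$ with the multiplicative Lie group $\quat^*$, so that $\quat^*\cong\R\times SU(2)$, the metric $b$ used in Theorem~\ref{theorem:shkt} is the bi-invariant metric of $\quat^*$, and $(I_-,J_-,K_-)$ is precisely the left-invariant hypercomplex structure given by left multiplication by the quaternion units $i,j,k$. The first step is to introduce the companion right-invariant hypercomplex structure $(I_-',J_-',K_-')$ --- for instance the pullback of $(I_-,J_-,K_-)$ under quaternionic conjugation $c\colon x\mapsto\bar x$, which is an anti-automorphism of $\quat^*$ and an isometry of $b$. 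It is again a strong HKT structure for the same $b$ (being $c^*$ of one), and since $c$ acts as $-\id$ on $\im\quat=\mathfrak{su}(2)$ it reverses the Cartan $3$-form, so its Bismut torsion is $H_-'=-H_-$, with $dH_-'=dH_-=0$.

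Second, on $N\times(\C^2\setminus\{(0,0)\})$ I take the product hypercomplex structure $(I',J',K'):=(I\times I_-',\,J\times J_-',\,K\times K_-')$ together with the \emph{same} metric $g+b$. Because $(N,I,J,K,g)$ is hyperk\"ahler its Bismut torsion vanishes, so the Bismut torsion of the structure used in Theorem~\ref{theorem:shkt} is $H=H_-$ (pulled back from the second factor) and that of $(I',J',K',g+b)$ is $H'=H_-'=-H$; both are closed. Hence
\[
(I\times I_-,\ J\times J_-,\ K\times K_-,\ I',\ J',\ K',\ g+b)
\]
is a generalized hyperk\"ahler structure on $N\times(\C^2\setminus\{(0,0)\})$, its two underlying HKT structures being the one from Theorem~\ref{theorem:shkt} and the new one.

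Third, I descend to the quotient. The generators $\phi_n$ of the $\Z$-action \eqref{eqn:Zaction} act on the $\quat^*$-factor by $\underline x\mapsto 2^n\underline x$, i.e.\ by multiplication by the central positive real $2^n$; being central this commutes with right, as well as left, multiplication by $i,j,k$, so $\phi_n$ is holomorphic with respect to $(I_-',J_-',K_-')$ exactly as it is for $(I_-,J_-,K_-)$, and it preserves $b$. Together with the fact that $\psi$ is a hyperk\"ahler isometry of $N$, this shows $\phi_n$ preserves the whole generalized hyperk\"ahler structure, which therefore descends, via the diffeomorphism established in the proof of Theorem~\ref{theorem:shkt}, to $\bigslant{N\times\C^2\setminus\{(0,0)\}}{\Z}\cong N_\psi\times S^3$.

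The only point requiring genuine care is the first step: pinning down the right-invariant hypercomplex structure on $\C^2\setminus\{(0,0)\}$ compatible with $b$ and checking that its Bismut torsion is exactly $-H_-$. This rests on the standard fact that on a Lie group with bi-invariant metric the Bismut torsion of a left-invariant Hermitian structure is the Cartan $3$-form $-b([\cdot,\cdot],\cdot)$ while that of a right-invariant one is its negative; alternatively one verifies it by a direct computation with the explicit complex structures $J_-$ and $J_-'$. Everything else is the bookkeeping already carried out in the proof of Theorem~\ref{theorem:shkt}.
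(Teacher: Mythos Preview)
Your proposal is correct and follows essentially the same route as the paper: both equip $\C^2\setminus\{(0,0)\}\cong\quat^*$ with the second (right-invariant) hypercomplex structure coming from right quaternionic multiplication, observe that together with $b$ it is strong HKT with torsion $-H_-$, take the product with the hyperk\"ahler factor, and descend using that multiplication by the central real $2^n$ commutes with right multiplication by the quaternion units. The only cosmetic difference is that you realize the right-invariant structure as $c^*(I_-,J_-,K_-)$ via quaternionic conjugation and argue the torsion flip through the Cartan $3$-form, whereas the paper defines $(I_+,J_+,K_+)$ directly and cites \cite{MV} for the generalized hyperk\"ahler property on $\quat\setminus\{0\}$.
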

\begin{proof}
The proof proceeds as the one of Theorem \ref{theorem:shkt}. In fact, we observe that the hypercomplex structure $(I_-,J_-,K_-)$ on $\C^2 \setminus\{(0,0)\}$ is precisely the left multiplication by $i, j$, and $k$ on $\mathbb{H} \setminus \{0\}$.  If we consider the other hypercomplex structure $(I_+,J_+,K_+)$ on  $\mathbb{H} \setminus \{0\}$  corresponding to the right multiplication by $i, j$, and $k$, then $(I_-,J_-,K_-,I_+,J_+,K_+,b)$ is a generalized hyperk\"ahler structure on $\C^2 \setminus\{(0,0)\}\cong \mathbb{H} \setminus \{0\}$ (see \cite{MV} for the details). 
Therefore the  structure $(I \times I_-,J \times J_-,K \times K_-,I \times I_+,J \times J_+,K \times K_+,g+b)$ is a generalized hyperk\"ahler on $K \times \C^2 \setminus\{(0,0)\}$, which descends to the quotient $N_\psi \times S^3$ by the definition of the action \eqref{eqn:Zaction}. In fact, on the hyperK\"ahler factor, $\Z$ acts by hyper-holomorphic isometries, and on $ \C^2 \setminus\{(0,0)\} \cong \mathbb{H} \setminus \{0\}$ it acts by multiplication by $2$, which clearly commute with the action of right multiplication by $i, j$, and $k$, concluding the proof.
\end{proof}
In the next Proposition we give some topological properties of  $N_\psi \times S^3$. 
\begin{proposition} \label{proposition:prop}
Let $ N_\psi \times S^3$ be a manifold constructed as in Theorem \ref{theorem:shkt}. Then
\begin{enumerate}
\item $N_\psi \times S^3$ is formal; \\
\item $N_\psi \times S^3$ is non-K\"ahler.
\end{enumerate}
\end{proposition}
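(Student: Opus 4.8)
The plan is to establish the two assertions separately, using the explicit description $N_\psi \times S^3 \cong \bigslant{N \times \C^2\setminus\{(0,0)\}}{\Z}$ from the proof of Theorem \ref{theorem:shkt}, together with the diffeomorphism $M_\psi \times S^3 \cong (N \times S^3)_{(\psi, \id)}$.

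For formality, I would argue that $N_\psi \times S^3$ is a finite product, up to rational homotopy, of formal spaces. Since $(N \times S^3)_{(\psi,\id)}$ is a mapping torus of $\psi \times \id$ on $N \times S^3$, and since $\psi$ is a hyperk\"ahler isometry of the compact hyperk\"ahler (hence K\"ahler) manifold $N$, the mapping torus $N_\psi$ is a co-K\"ahler manifold in the sense of \cite{Li}; by \cite{CLM} co-K\"ahler manifolds are formal in the sense of Sullivan. One then notes $N_\psi \times S^3$ is diffeomorphic to $N_\psi \times S^3$ (trivial product), and formality is preserved under products (the minimal model of a product is the tensor product of the minimal models, and a tensor product of formal algebras is formal), while $S^3$, being a compact Lie group — indeed a sphere — is formal. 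Hence $N_\psi \times S^3$ is formal. The only subtlety is checking that the identification $M_\psi \times S^3 \cong (N\times S^3)_{(\psi,\id)}$ genuinely realizes the space as an honest product $N_\psi \times S^3$ rather than a twisted one; this follows because the mapping torus of $f \times \id_P$ is $M_f \times P$ for any manifold $P$, as the $\Z$-action is trivial on the $P$-factor.

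For non-K\"ahlerness, the cleanest route is a cohomological obstruction. I would compute the odd Betti numbers, or more directly observe that $N_\psi \times S^3$ fibers over $S^1$ (being a mapping torus) and retains an $S^3$ factor, so its third Betti number and first Betti number interact in a way incompatible with the Hard Lefschetz property or with the parity constraints on Betti numbers of compact K\"ahler manifolds. Concretely: $b_1(N_\psi \times S^3) = b_1(N_\psi) = 1 + \dim\ker(\psi^* - \id \text{ on } H^1(N))$; since $N$ is a compact hyperk\"ahler (simply connected, if $N$ is a K3 surface, or with $H^1$ on which $\psi^*$ acts) manifold, one gets $b_1(N_\psi \times S^3)$ odd when $N$ is simply connected, namely equal to $1$. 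A compact K\"ahler manifold must have even $b_1$, so $b_1 = 1$ already rules out a K\"ahler structure in that case. If one wants an argument uniform in $N$, one can instead invoke that $N_\psi \times S^3$ carries the nonzero parallel Bismut torsion $3$-form $H$ constructed in Theorem \ref{theorem:shkt} (nonzero because the $\C^2\setminus\{(0,0)\}$-factor contributes the standard non-flat strong HKT torsion $-d^c_{I_-}\omega_{I_-} \neq 0$), so the hyperhermitian metric is not hyperk\"ahler; but a compact complex manifold admitting any K\"ahler metric together with this hypercomplex structure would, by the Fino–Vezzoni-type result or simply by $b_1$ parity, lead to a contradiction.

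The main obstacle I anticipate is the non-K\"ahler part in the generality stated: the theorem allows an arbitrary compact hyperk\"ahler $N$ (not necessarily a K3 surface), so $b_1(N)$ need not vanish and $\psi^*$ could act with a $(+1)$-eigenspace of any parity, making the $b_1$-parity argument inconclusive. To handle this I would fall back on the $S^3$ factor: $H^3(N_\psi \times S^3)$ contains the class pulled back from $[S^3]$, and the wedge-product structure forced on a K\"ahler manifold by Hard Lefschetz (or the formality-plus-Lefschetz package) fails here because $S^3$ is rationally a generator in degree $3$ with trivial cup products — one shows the cohomology ring cannot satisfy Hard Lefschetz in the middle dimension. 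Alternatively, and most robustly, one observes that any compact K\"ahler manifold has $b_3$ even, whereas the Künneth formula gives $b_3(N_\psi \times S^3) = b_3(N_\psi) + b_0(N_\psi)\cdot b_3(S^3) = b_3(N_\psi) + 1$, and for a mapping torus $b_3(N_\psi) = \dim\ker(\psi^*-\id|_{H^3 N}) + \dim\mathrm{coker}(\psi^*-\id|_{H^2 N})$; using Poincaré duality on $N$ and that $\psi^*$ is an isometry of a hyperk\"ahler structure (so commutes with the Hodge $*$), these two dimensions are equal, making $b_3(N_\psi)$ even and hence $b_3(N_\psi \times S^3)$ odd — contradicting K\"ahlerness.
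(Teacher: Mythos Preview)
Your formality argument is essentially identical to the paper's: $N_\psi$ is co-K\"ahler hence formal by \cite{CLM}, $S^3$ is formal, and formality is preserved under products.

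For non-K\"ahlerness, the paper uses precisely your first approach: K\"unneth gives $b_1(N_\psi\times S^3)=b_1(N_\psi)$, and \cite{CLM} shows that \emph{every} co-K\"ahler manifold has odd first Betti number, so one is done. Your concern that this fails for general $N$ is unfounded. Indeed, in your own formula $b_1(N_\psi)=1+\dim\ker(\psi^*-\id\,|\,H^1(N;\R))$, the kernel dimension is always even: since $\psi$ is a holomorphic isometry of the K\"ahler manifold $N$, the induced $\psi^*$ commutes with the weight-one Hodge structure (equivalently with the complex structure on $H^1(N;\R)$ coming from $H^{1,0}\oplus H^{0,1}$), so its $(+1)$-eigenspace is a complex subspace and hence even-dimensional. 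This is exactly what underlies the \cite{CLM} result.

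Your fallback $b_3$ argument, besides being unnecessary, has a gap: Poincar\'e duality on the $4n$-manifold $N$ pairs $H^3$ with $H^{4n-3}$, not with $H^2$, so it does not yield $\dim\ker(\psi^*-\id|_{H^3})=\dim\operatorname{coker}(\psi^*-\id|_{H^2})$. The fact that an orthogonal operator $A$ satisfies $\dim\ker(A-\id)=\dim\operatorname{coker}(A-\id)$ holds only on the \emph{same} space and does not bridge degrees $2$ and $3$. So as written that route does not close; fortunately it is not needed.
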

\begin{proof} 
As $N_\psi$ is a co-K\"ahler manifold, it is formal. The results follows by the fact that the formality is preserved under the cartesian product. \\
The second statement is a consequence of the K\"unneth formula. Indeed, by the K\"unneth formula, $b_1(N_\psi \times S^3)=b_1(N_\psi)$. Since the first Betti number of a co-K\"ahler manifold is odd \cite{CLM}, the result easily follows. 
\end{proof}
\begin{example} \label{ex:1}
Let $N=\cal{F}$ be the Fermat quartic, and let $\psi$ be the hyperk\"ahler isometry constructed in the Example \ref{example:hkisometry}. By Theorem \ref{theorem:shkt}, the manifold $N_\psi \times S^3$ admits a strong HKT structure. We show that the Example is not trivial, i.e., it does not split as a product of $N \times S^3 \times S^1$. Since $\psi \neq Id$, the result follows by \cite{BFG}. We also observe that it is a non-trivial example of a non-K\"ahler Bismut Hermitian Einstein manifold. \end{example}

\begin{remark}
We note the difference of the geometry induced on a mapping tori in the case when $\psi$ is a holomorphic symplectic automorphism of infinite order. According to \cite{FGV}, the space $N_\psi\times S^1$ admits a balanced metric and no SKT metric. 
We point out that the example constructed in Example \ref{ex:1} admits a finite cover isomorphic to $\cal{F} \times S^3 \times S^1$, according to \cite{BPT}. In fact, since any hyperK\"ahler isometry is of finite order, there must exists a $k \in \Z$ such that $\psi^k =Id$, then $\tilde X= \cal{F} \times S^3 \times \R / k\Z$ where $k\Z \subset \Z$ acts by $kn \cdot (q,p,t)=(\psi^{kn} q, p, t+kn)$ is a finite cover of $N_\psi \times S^3$ and it is isomorphic to $\cal{F} \times S^3 \times S^1$.\\
 In particular, our example $N_\psi \times S^3$ is a local product of two strong HKT manifolds. Indeed,  it admits a  $\mathbb{Z}_2$ quotient which is a product of a Hopf surface and a K3 orbifold $N / \langle \psi \rangle$.
\end{remark}

\end{document}